\author{Magnus Fontes}
\newtheorem{definition}{Definition}
\newtheorem{theorem}{Theorem}
\newtheorem{lemma}{Lemma}
\newtheorem{proposition}{Proposition}
\numberwithin{theorem}{section}
\numberwithin{equation}{section}
\numberwithin{definition}{section}
\numberwithin{corollary}{section}
\numberwithin{lemma}{section}
\begin{document}
\title{Statistical and knowledge supported visualization of multivariate data.}
\author{Magnus Fontes \\
Centre for Mathematical Sciences \\Lund University, Box 118, SE-22100, Lund, Sweden.\\
email: fontes@maths.lth.se} 
\date{}
\maketitle

{\bf Abstract}
In the present work we have selected a collection of statistical and mathematical tools 
useful for the exploration of multivariate data 
and we present them in a form that is meant to be 
particularly accessible to a classically trained mathematician.
We give self contained and streamlined introductions to
principal component analysis, multidimensional scaling and 
statistical hypothesis testing. 
Within the presented mathematical framework we then propose a 
general exploratory methodology for the investigation of 
real world high dimensional datasets that builds on 
statistical and knowledge supported visualizations.
We exemplify the proposed methodology by applying it to several different 
genomewide DNA-microarray datasets. 
The exploratory methodology should be seen as an embryo
that can be expanded and developed in many directions. 
As an example we point out some recent promising advances in the theory for 
random matrices that, if further developed, potentially could provide 
practically useful and theoretically well founded estimations of 
information content in dimension reducing visualizations. 
We hope that the present work can serve as an introduction to, and 
help to stimulate more
research within, the interesting and rapidly expanding 
field of data exploration.

\section{Introduction.}
In the scientific exploration of some real world phenomena a lack of detailed knowledge about 
governing first principles makes it hard to construct well-founded mathematical models for describing  and 
understanding observations.
In order to gain some preliminary understanding of involved mechanisms and to be able
to make some reasonable predictions we then often have to recur to purely statistical models.
Sometimes though, a stand alone and very general statistical approach 
fails to exploit the full exploratory 
potential for a given dataset. 
In particular a general statistical model a priori often does not incorporate 
all the accumulated field-specific expert knowledge that might exist
concerning a dataset under consideration.
In the present work we argue for the use of a set of statistical and knowledge supported visualizations 
as the backbone of the exploration of high dimensional multivariate 
datasets that are otherwise hard to model and analyze.
The exploratory methodology we propose is generic but we 
exemplify it by applying it to several different datasets 
coming from the field of molecular biology.
Our choice of example application field is in principle anyhow 
only meant to be
reflected in the list of references where we have consciously striven to give 
references that should be particularly useful and relevant for researchers interested in bioinformatics.
The generic case we have in mind is that we are given a set of 
observations of several different variables 
that presumably have some interrelations that we want to uncover. 
There exist many rich real world sources giving rise to interesting examples of such datasets
within the fields of e.g. finance, astronomy, meteorology or life science and the reader should
without difficulty be able to pick a favorite example to bear in mind. 
 
We will use separate but synchronized 
Principle Component Analysis (PCA) plots of both variables and samples to visualize datasets.
The use of separate but synchronized PCA-biplots that we argue 
for is not standard and we claim that it is particularly advantageous, compared to 
using traditional PCA-biplots, when the datasets under 
investigation are high dimensionsional.
A traditional PCA-biplot depicts both the variables and the samples in the same plot
and if the dataset under consideration is high dimensional such a joint variable/sample plot can easily become
overloaded and hard to interpret.
In the present work we give a presentation of 
the linear algebra of PCA 
accentuating the natural inherent duality of the 
underlying singular value decomposition. 
In addition we point out how the basic algorithms 
easily can be adapted to produce nonlinear versions of PCA, so called multidimensional scaling, 
and we illustrate how these different versions of PCA can reveal relevant structure 
in high dimensional and complex real world datasets. 
Whether an observed
structure is relevant or not will be 
judged by knowledge supported and statistical evaluations.

Many present day datasets, coming from the application fields mentioned above,
share the statistically challenging peculiarity that 
the number of measured variables ($p$) can be very large ($ 10^4 \leq p \leq 10^{10}$), 
while at the same time the number of observations ($N$) 
sometimes can be considerably smaller ($ 10^1 \leq N \leq 10^3$).
In fact all our example datasets will share this so called 
"large $p$ small $N$" characteristic and 
our exploratory scheme, in particular the
statistical evaluation, is well adapted to cover also this situation. 
In traditional statistics one usually is presented with the reverse situation, 
i.e. "large $N$ small $p$",
and if one tries to apply traditional statistical methods 
to "large $p$ small $N$" datasets one sometimes runs into difficulties.
To begin with, in the applications we have in mind here, the underlying probability distributions 
are often unknown and then, if the number of observations is relatively small, they are 
consequently hard to estimate. 
This makes robustness of employed statistical methods a key issue. 
Even in cases when we assume that we know the underlying probability distributions
or when we use very robust statistical methods the "large $p$ small $N$" case presents difficulties.
One focus of statistical research during the last few decades has in fact been driven by 
these "large $p$ small $N$" datasets and the possibility for fast implementations 
of statistical and mathematical algorithms. 
An important example of these new trends in statistics 
is multiple hypothesis testing on a huge number of variables. 
High dimensional multiple hypothesis testing has stimulated the creation of new statistical tools 
such as the replacement of the standard concept of p-value in hypothesis testing 
with the corresponding q-value connected with the notion of false discovery rate, 
see  \cite{BH1}, \cite{BH2}, \cite{STO}, \cite{STT} for the seminal ideas. 
As a remark we point out that
multivariate statistical analogues of classical univariate statistical tests 
sometimes can perform better in multiple hypothesis testing,
but then a relatively small number of samples normally makes it necessary to first 
reduce the dimensionality of the data, for instance by using PCA, 
in order to be able to apply the multivariate tests, 
see e.g. \cite{BEY}, \cite{SWK}, \cite{KIM} for ideas in this direction. 
In the present work we give an overview and an
introduction to the above mentioned statistical notions.

The present work is in general meant to be one introduction to,
and help to stimulate more research within, the field of data exploration.
We also hope to convince the reader that statistical and 
knowledge supported visualization already is a versatile and powerful tool
for the exploration of high dimensional real world datasets.
Finally, "Knowledge supported" should here be interpreted as "any use of some extra information 
concerning a given dataset that the researcher might 
possess, have access to or gain during the exploration" when 
analyzing the visualization. 
We illustrate this knowledge supported approach 
by using knowledge based annotations coming with our example datasets.
We also briefly comment on how to use information 
collected from available databases to evaluate or preselect 
groups of significant variables, see e.g. \cite{Barry},\cite{Chen}, \cite{Khatri},\cite{Rivals} 
for some more far reaching suggestions in this direction.

\section{Singular value decomposition and Principal component analysis.}
Singular value decomposition (SVD) was discovered independently by several mathematicians 
towards the end of the 19'th century. See \cite{Stew} for an account of the early history of SVD.
Principal component analysis (PCA) for data analysis was then introduced by Pearson \cite{P} 
in 1901 and independently later developed by Hotelling \cite{H2}. 
The central idea in classical PCA is to use an SVD on the 
column averaged sample matrix to reduce the dimensionality in the data set 
while retaining as much variance as possible. 
PCA is also closely related to the Karhunen-Lo{\`e}ve expansion (KLE) of a stochastic process \cite{Kar}, \cite{Loeve}.
The KLE of a given centered stochastic process is an orthonormal $L^2$-expansion of the process 
with coefficients that are uncorrelated random variables. 
PCA corresponds to the empirical or sample version of the KLE, i.e. when the expansion is inferred from samples.
Noteworthy here is the Karhunen--Lo{\`e}ve theorem stating that if the underlying process is Gaussian, then
the coefficients in the KLE will be independent and normally distributed.
This is e.g. the basis for showing results concerning 
the optimality of KLE for filtering out Gaussian white noise.

PCA was proposed as a method to analyze genomewide expression data by Alter et al. \cite{OA}
and has since then become a standard tool in the field.
Supervised PCA was suggested by Bair et.al. as a regression and prediction method 
for genomewide data \cite{B1}, \cite{B2}, \cite{B3}.
Supervised PCA is similar to normal PCA, the only difference being that the researcher preconditions the data
by using some kind of external information. 
This external information can come from e.g. a regression analysis with respect to some
response variable or from some knowledge based considerations.  
We will here give an introduction to SVD and PCA that focus on visualization and 
the notion of using separate but synchronized biplots, i.e. plots of both
samples and variables. 
Biplots displaying samples and variables in the same usually twodimensional
diagram have been used frequently in 
many different types of applications, see e.g. \cite{GabKR1}, \cite{GabKR2}, \cite{GoHa} and \cite{Braak},
but the use of separate but synchronized biplots that we present is not standard.
We finally describe the method of multidimensional scaling which builds on
standard PCA, but we start by describing SVD 
for linear operators between finite dimensional euclidean spaces with a special focus on duality.  

\subsection{Dual singular value decomposition}
Singular value decomposition is a decomposition of 
a linear mapping between euclidean spaces.
We will discuss the finite dimensional case and we consider 
a given linear mapping $L: {\bf R}^N \longrightarrow {\bf R}^p$.

Let ${\bf e_1}, {\bf e_2}, \dots , {\bf e_N}$ be the canonical basis in ${\bf R}^N$ and let 
${\bf f_1}, {\bf f_2}, \dots , {\bf f_p}$ be the canonical basis in ${\bf R}^p$.
We regard ${\bf R}^N$ and ${\bf R}^p$ as euclidean spaces equipped with their respective canonical 
scalar products, $(\cdot, \cdot)_{{\bf R}^N}$ and $(\cdot, \cdot)_{{\bf R}^p}$,
in which the canonical bases are orthonormal.

Let $L^*:{\bf R^p} \longrightarrow {\bf R}^N$ denote the adjoint operator of $L$ defined by
\begin{equation}\label{eq:defadjoint}
(L({\bf u}), {\bf v})_{{\bf R}^p} = ({\bf u}, L^*({\bf v}))_{{\bf R}^N}  \quad ; \; {\bf u} \in {\bf R}^N
\; ; \; {\bf v} \in {\bf R}^p \, .
\end{equation}
Observe that in applications $L({\bf e_k})$, $k=1,2, \dots , N$, normally represent the arrays of observed variable values for the different samples and that
$L^*({\bf f_j})$, $j=1,2, \dots , p$, then represent the observed values of the variables.
In our example data sets, the unique $p \times N$ matrix $X$ representing $L$ in the canonical bases, i.e.
$$
 X_{jk}= ({\bf f_j}, L({\bf e}_k))_{\bf R^p}  \quad ; j=1,2, \dots , p \; ; \; k=1,2, \dots, N \,  ,
$$
contains measurements for all variables in all samples.
The transposed $N \times p$  matrix  $X^T$ contains the same information and 
represents the linear mapping $L^*$ in the canonical bases.

The goal of a dual SVD is to find orthonormal bases in 
${\bf R}^N$ and ${\bf R}^p$ such that the matrices representing the linear operators $L$ and $L^*$
have particularly simple forms.

We start by noting that directly from (\ref{eq:defadjoint}) 
we get the following direct sum decompositions into orthogonal subspaces
$$
{\bf R}^N = Ker \,  L \oplus Im \,  L^*
$$
(where $ Ker \, L$ denotes the kernel of $L$ and $Im \,  L^*$ denotes the image of $L^*$) and 
$$
{\bf R}^p = Im \, L \oplus Ker \, L^*\, .
$$
We will now make a further dual decomposition of $Im \, L$ and $Im \, L^*$.

Let $r$ denote the rank of $L:{\bf R}^N \longrightarrow {\bf R}^p$, 
i.e. $r= dim \,( Im \, L )= dim \, (Im \, L^*)$. 
The rank of the positive and selfadjoint operator $L^* \circ L:{\bf R}^N \longrightarrow {\bf R}^N$ is then also equal to $r$, and 
by the spectral theorem there exist values 
$\lambda_1 \geq \lambda_2 \geq \dots \geq \lambda_r>0$ and corresponding orthonormal 
vectors
${\bf u^1}, {\bf u^2}, \dots ,{\bf u^r}$, with ${\bf u^k} \in {\bf R}^N$, such that
\begin{equation}\label{eq:L*Leigenvectors}
L^* \circ L ({\bf u^k}) = \lambda_k^2 {\bf u^k} \quad ; \; k = 1,2, \dots , r \, .
\end{equation}
If $r <N$, i.e. $dim \, (Ker \, L)>0$, 
then zero is also an eigenvalue for $ L^* \circ L : {\bf R}^N \longrightarrow {\bf R}^N$ 
with multiplicity $N-r$. 

Using the orthonormal set of eigenvectors $\{ {\bf u^1}, {\bf u^2}, \dots ,{\bf u^r}\}$ for $L^* \circ L$ spanning $Im \, L^*$,
we define a corresponding 
set of dual vectors ${\bf v^1}, {\bf v^2}, \dots ,{\bf v^r}$ in ${\bf R}^p$ by
\begin{equation}\label{eq:utov}
L({\bf u^k})=: \lambda_k {\bf v^k} \quad ; \; k=1,2, \dots , r \, .
\end{equation}
From (\ref{eq:L*Leigenvectors}) it follows that
\begin{equation}\label{eq:vtou}
L^*({\bf v^k})= \lambda_k {\bf u^k} \quad ; \; k=1,2, \dots , r 
\end{equation}
and that
\begin{equation}\label{eq:LL*eigenvectors}
L \circ L^* ({\bf v^k}) = \lambda_k^2 {\bf v^k} \quad ; \; k = 1,2, \dots , r \, .
\end{equation}
The set of vectors  $\{ {\bf v^1}, {\bf v^2}, \dots ,{\bf v^r}\}$ defined by (\ref{eq:utov}) spans $Im \, L$ and is 
an orthonormal set of eigenvectors for the 
selfadjoint operator $L \circ L^*:{\bf R}^p \longrightarrow {\bf R}^p$.
We thus have a completely dual setup and canonical decompositions of both ${\bf R}^N$ 
and ${\bf R}^p$ into direct sums of 
subspaces spanned by 
eigenvectors corresponding to the distinct eigenvalues.
We make the following definition.
\begin{definition}
A dual  
singular value decomposition system for
an operator pair ($L$, $L^*$) is a system consisting of numbers $\lambda_1 \geq \lambda_2 \geq \dots \geq \lambda_r> 0$
and two sets of orthonormal vectors, 
$\{ {\bf u^1}, {\bf u^2}, \dots ,{\bf u^r}\}$ and $\{ {\bf v^1}, {\bf v^2}, \dots ,{\bf v^r}\}$ with
$r=rank \, (L) =rank \,(L^*)$, satisfying 
(\ref{eq:L*Leigenvectors})--(\ref{eq:LL*eigenvectors}) above.

The positive values $\lambda_1, \lambda_2, \dots, \lambda_r$ are called {\it the singular values} of ($L$, $L^*$).
We will call the vectors ${\bf u^1}, {\bf u^2}, \dots ,{\bf u^r}$ principal components for $Im \, L^*$
and the vectors ${\bf v^1}, {\bf v^2}, \dots ,{\bf v^r}$ principal components for $Im \, L$.
\end{definition}
Given a dual SVD system we now complement the principal components for $Im \, L^*$, ${\bf u^1}, {\bf u^2}, \dots ,{\bf u^r}$, 
to an orthonormal basis ${\bf u^1}, {\bf u^2}, \dots ,{\bf u^N}$ in ${\bf R}^N$ and 
the principal components for $Im\, L$, ${\bf v^1}, {\bf v^2}, \dots ,{\bf v^r}$, 
to an orthonormal basis ${\bf v^1}, {\bf v^2}, \dots ,{\bf v^p}$ in ${\bf R}^p$.

In these bases we have that
\begin{equation}\label{eq:diagform}
({\bf v^j}, L({\bf u^k}))_{{\bf R}^p} =(L^* ({\bf v^j}), {\bf u^k})_{{\bf R}^N} =
\begin{cases}
\lambda_k \delta_{jk} &  \text{if $j, k \leq r$} \\
0 & \text{otherwise} \, .
\end{cases}
\end{equation}
This means that in these ON-bases $L: {\bf R}^N \longrightarrow {\bf R}^p$ 
is represented by the diagonal $p \times N$ matrix
\begin{gather}\label{eq:diagmatrix}
\begin{bmatrix} 
D& 0 \\
0 & 0
\end{bmatrix}
\end{gather}
where $D$ is the $r \times r$ diagonal matrix having the singular values of ($L$, $L^*$) in descending order on the diagonal.
The adjoint operator $L^*$ 
is represented in the same bases by the transposed matrix, 
i.e. a diagonal $N \times p$ matrix. 

We translate this to operations on the corresponding matrices as follows.
Let $U$ denote the $N \times r$ matrix having the coordinates, in the canonical basis in ${\bf R^N}$, 
for ${\bf u^1}, {\bf u^2}, \dots {\bf u^r}$
as columns, and let $V$ denote the $p \times r$ matrix having the coordinates, in the canonical basis in ${\bf R^p}$, for 
${\bf v^1}, {\bf v^2}, \dots {\bf v^r}$
as columns. Then (\ref{eq:diagform}) is equivalent to
$$
X=VDU^T \quad \mbox{and} \quad X^T=U D V^T \, .
$$
This is called a dual singular value decomposition for the pair of matrices ($X$, $X^T$).

Notice that the singular values  
and the corresponding separate eigenspaces for $L^* \circ L$  as described above are canonical, 
but that the set $\{ {\bf u^1}, {\bf u^2}, \dots ,{\bf u^r} \}$ (and thus also the connected set
$\{ {\bf v^1}, {\bf v^2}, \dots ,{\bf v^r}\}$) is not 
canonically defined by $L^* \circ L$. 
This set is only canonically defined up to 
actions of the appropriate orthogonal groups on the separate eigenspaces.

\subsection{Dual principal component analysis}
We will now discuss how to use a dual SVD system to obtain optimal approximations of
a given operator $L:{\bf R}^N \longrightarrow {\bf R^p}$ by operators of lower rank. 
If our goal is to visualize the data, then it is natural to measure the approximation error
using a unitarily invariant norm, i.e. a norm $\| \cdot \|$ that is invariant with respect to unitary transformations
on the variables or on the samples, i.e.
\begin{equation}\label{eq:unitinv}
\| L\| = \| V \circ L \circ U \| \quad \mbox{for all $V$ and $U$ s.t.} \; V^*V = Id \; \mbox{and} \; U^* U= Id \, .
\end{equation}
Using an SVD, directly from (\ref{eq:unitinv}) we conclude 
that such a norm is necessarily a symmetric function of the
singular values of the operator. 
We will present results for the $L^2$--norm of the singular values, but 
the results concerning optimal approximations are actually valid with respect to any 
unitarily invariant norm, see e.g. \cite{MIRS} and \cite{RAO} for information in this direction.
We omit proofs, but all the results in this section are 
proved using SVDs for the involved operators.

The  Frobenius (or Hilbert-Schmidt) norm for an operator $L:{\bf R}^N \longrightarrow {\bf R^p}$ of rank $r$ 
is defined by
$$
\| L \|_{F}^2:= \sum_{k=1}^r \lambda^2_k \, ,
$$
where $\lambda_k$, $k=1,2, \dots , r$ are the singular values of ($L$, $L^*$).

Now let $\mathcal{M}_{n \times n}$ denote the
set of real $n \times n$ matrices. 
We then define the set of orthogonal projections in ${\bf R}^n$ of rank $s \leq n$ as
$$
\mathcal{P}^n_s := \{ \Pi \in \mathcal{M}_{n \times n} \; ; \; \Pi^*= \Pi\; ; \; \Pi \circ \Pi = \Pi \; ; \;  rank (\Pi) =s \} \, .
$$
One important thing about orthogonal projections is that they never increase the Frobenius norm, i.e.
\begin{lemma}
Let $L:{\bf R}^N \longrightarrow {\bf R}^p$ be a given linear operator. Then
$$
\| \Pi \circ L \|_{F} \leq \| L \|_{F} \quad \mbox{for all} \; \Pi \in \mathcal{P}^p_s 
$$
and
$$
\| L \circ \Pi \|_{F} \leq \| L \|_{F} \quad \mbox{for all} \; \Pi \in \mathcal{P}^N_s \, .
$$
\end{lemma}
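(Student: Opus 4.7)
The plan is to reduce both inequalities to the elementary fact that an orthogonal projection is a (pointwise) contraction, and then to sum over a suitably chosen orthonormal basis so that the squared Frobenius norm reveals itself as a sum of squared vector norms.

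For the first inequality, I would start from the identity
\begin{equation*}
\| M \|_F^2 = \sum_{k} \| M {\bf e}_k \|_{{\bf R}^p}^2
\end{equation*}
valid for any orthonormal basis $\{{\bf e}_k\}$ of the source space; this identity is immediate from the SVD representation established in the previous subsection, since in the bases in which $M$ is diagonal the sum of squared column norms equals $\sum \lambda_k^2$, and the sum is independent of the chosen orthonormal basis. Applying this with $M = \Pi \circ L$ and using the canonical basis of ${\bf R}^N$, each term satisfies $\| \Pi L {\bf e}_k \|^2 \leq \| L {\bf e}_k \|^2$ because an orthogonal projection satisfies $\| \Pi {\bf w} \|^2 = ({\bf w}, \Pi {\bf w}) \leq \| {\bf w} \|^2$ by Cauchy--Schwarz (or equivalently by writing ${\bf w} = \Pi {\bf w} + (Id - \Pi){\bf w}$ as an orthogonal sum). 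Summing gives the first claim.

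For the second inequality I would pass through the adjoint. Expand
\begin{equation*}
\| L \circ \Pi \|_F^2 = \mathrm{tr}\bigl( \Pi \circ L^* \circ L \circ \Pi \bigr) = \mathrm{tr}\bigl( \Pi \circ L^* \circ L \bigr),
\end{equation*}
using cyclicity of the trace together with $\Pi^2 = \Pi$ and $\Pi^* = \Pi$. Now choose an orthonormal basis ${\bf w}_1, \dots, {\bf w}_N$ of ${\bf R}^N$ adapted to the projection, so that ${\bf w}_1, \dots, {\bf w}_s$ span $Im\,\Pi$ and the remaining ${\bf w}_k$ span $Ker\,\Pi$. Then
\begin{equation*}
\mathrm{tr}\bigl( \Pi \circ L^* \circ L \bigr) = \sum_{k=1}^{s} ( L^* L {\bf w}_k, {\bf w}_k )_{{\bf R}^N} = \sum_{k=1}^{s} \| L {\bf w}_k \|_{{\bf R}^p}^2 \leq \sum_{k=1}^{N} \| L {\bf w}_k \|^2 = \| L \|_F^2,
\end{equation*}
which is the desired bound.

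There is really no hard part here: the statement is a structural consequence of the SVD combined with the defining properties of orthogonal projections, and the proof is essentially a two line calculation once one picks the right basis. The only point that requires mild care is noting that $\| M \|_F^2 = \sum_k \| M {\bf e}_k \|^2$ is basis-independent, which is exactly what lets us choose a basis adapted to $\Pi$ in the second half. Both inequalities could alternatively be packaged as an application of the Ky Fan / Mirsky inequalities for unitarily invariant norms alluded to in the text, but for the Frobenius norm the trace argument above is the most direct.
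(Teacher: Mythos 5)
Your proof is correct. Note that the paper itself does not supply a proof of this lemma: it states explicitly that ``we omit proofs, but all the results in this section are proved using SVDs for the involved operators,'' so there is no argument in the text to compare against line by line. Your route is the natural elementary one: reduce everything to the contraction property $\| \Pi {\bf w} \| \leq \| {\bf w} \|$ of an orthogonal projection and to the basis-independence of $\sum_k \| M {\bf e}_k \|^2 = \mathrm{tr}(M^* M)$, which is really a trace identity and does not require the SVD at all (you invoke the SVD only to justify basis-independence, but $\mathrm{tr}(M^*M)$ does the same job more directly and is consistent with the definition $\| L \|_F^2 = \sum \lambda_k^2$ via the diagonal representation (\ref{eq:diagmatrix})). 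Both halves of your argument are sound; the only simplification worth pointing out is that the second inequality follows from the first in one line by duality, since $\| L \circ \Pi \|_F = \| (L \circ \Pi)^* \|_F = \| \Pi \circ L^* \|_F \leq \| L^* \|_F = \| L \|_F$, the Frobenius norm being invariant under taking adjoints because $L$ and $L^*$ share the same singular values. This spares you the trace-cyclicity computation and the choice of an adapted basis, though your version is equally valid.
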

Using this Lemma one can prove the following approximation theorems.
\begin{theorem}\label{th:approxmax}
Let $L:{\bf R}^N \longrightarrow {\bf R}^p$ be a given linear operator. Then
\begin{eqnarray}\label{eq:optapproxmax}
\sup_{\Pi^p \in\mathcal{P}^p_s\; ; \;\Pi^N \in\mathcal{P}^N_s } \| {\Pi}^p \circ L \circ {\Pi}^N \|_F =
\sup_{\Pi \in\mathcal{P}^p_s} \| \Pi \circ L \|_{F} = \nonumber \\
= \sup_{\Pi \in\mathcal{P}^N_s} \| L \circ \Pi \|_{F}= \left\{ \sum_{k=1}^{\min(s,r)} \lambda_k^2 \right\}^{1/2}
\end{eqnarray}
and equality is attained in (\ref{eq:optapproxmax}) by projecting onto the $\min(s,r)$ first
principal components for $ Im \, L$ and $Im \, L^*$.
\end{theorem}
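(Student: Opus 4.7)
The plan is to reduce everything to a weighted optimization over the diagonal in a basis adapted to the SVD, and then use the Lemma to pinch the three suprema together.

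First, I would fix a dual SVD system $\{\lambda_k\}$, $\{{\bf u}^k\}$, $\{{\bf v}^k\}$ for $(L,L^*)$ and write $L = \sum_{k=1}^r \lambda_k {\bf v}^k ({\bf u}^k)^*$. For any $\Pi \in \mathcal{P}^p_s$, the orthonormality of the ${\bf u}^k$ then yields
\begin{equation*}
\|\Pi \circ L\|_F^2 \;=\; \sum_{k=1}^r \lambda_k^2 \, \|\Pi {\bf v}^k\|^2 .
\end{equation*}
Extending $\{{\bf v}^1, \ldots, {\bf v}^r\}$ to an ON-basis of ${\bf R}^p$ and using $\Pi = \Pi^*\Pi$ gives $\sum_{k=1}^p \|\Pi {\bf v}^k\|^2 = \mathrm{tr}(\Pi) = s$, hence the coefficients $c_k := \|\Pi {\bf v}^k\|^2$ satisfy $0 \le c_k \le 1$ and $\sum_{k=1}^r c_k \le s$.

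Next, I would solve the constrained linear program $\max \sum_{k=1}^r \lambda_k^2 c_k$ under these constraints. Since $\lambda_1^2 \ge \lambda_2^2 \ge \cdots \ge \lambda_r^2 > 0$, the maximum is attained by placing mass on the largest weights, i.e.\ $c_k = 1$ for $k \le \min(s,r)$ and $c_k = 0$ otherwise, giving the upper bound $\sum_{k=1}^{\min(s,r)} \lambda_k^2$. This bound is realized by taking $\Pi$ to be the orthogonal projection onto $\mathrm{span}\{{\bf v}^1, \ldots, {\bf v}^{\min(s,r)}\}$, which establishes the equality for $\sup_{\Pi \in \mathcal{P}^p_s} \|\Pi \circ L\|_F$. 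The corresponding identity for $\sup_{\Pi \in \mathcal{P}^N_s} \|L \circ \Pi\|_F$ follows by the same argument applied to $L^*$, using that $L$ and $L^*$ share the same singular values.

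Finally, to handle the double projection, I would apply the previous Lemma twice: $\|\Pi^p \circ L \circ \Pi^N\|_F \le \|\Pi^p \circ L\|_F$, and symmetrically $\|\Pi^p \circ L \circ \Pi^N\|_F \le \|L \circ \Pi^N\|_F$, so the left-hand supremum is dominated by the other two. For the matching lower bound, take $\Pi^p$ to be the projection onto $\mathrm{span}\{{\bf v}^1,\ldots,{\bf v}^{\min(s,r)}\}$ and $\Pi^N$ the projection onto $\mathrm{span}\{{\bf u}^1,\ldots,{\bf u}^{\min(s,r)}\}$. By the SVD relations (\ref{eq:utov}), these choices preserve each of the top $\min(s,r)$ terms in the SVD expansion of $L$ and annihilate the rest, so $\|\Pi^p \circ L \circ \Pi^N\|_F^2 = \sum_{k=1}^{\min(s,r)} \lambda_k^2$, closing the chain.

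The only genuinely non-routine step is the constrained optimization over the coefficients $c_k$; the rest is SVD bookkeeping. The main obstacle I would expect in writing a fully rigorous argument is justifying cleanly that the constraints $c_k \in [0,1]$ and $\sum c_k \le s$ exactly characterize (up to what is needed for the maximization) the values attainable from rank-$s$ projections, which requires the trace identity $\mathrm{tr}(\Pi) = s$ and the fact that $\Pi$ is an orthogonal, not merely idempotent, projection.
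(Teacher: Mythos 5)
Your argument is correct, and it follows exactly the route the paper indicates for this result: the paper omits the details but states that the approximation theorems are proved ``using SVDs for the involved operators'' together with the preceding Lemma that orthogonal projections do not increase the Frobenius norm, which is precisely the combination you use (SVD expansion reducing the problem to a linear program in the coefficients $c_k=\|\Pi{\bf v}^k\|^2$, plus the Lemma to pinch the double-projection supremum between the two single ones). The only cosmetic caveat, already present in the theorem's own phrasing, is that when $s>r$ the projection onto the first $\min(s,r)=r$ principal components must be padded with arbitrary extra orthogonal directions to have rank exactly $s$; this does not change the attained value.
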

\begin{theorem}\label{th:approxmin}
Let $L:{\bf R}^N \longrightarrow {\bf R}^p$ be a given linear operator. Then
\begin{eqnarray}\label{eq:optapproxmin}
\inf_{\Pi^p \in\mathcal{P}^p_s\; ; \;\Pi^N \in\mathcal{P}^N_s } \| L- {\Pi}^p \circ L \circ {\Pi}^N \|_F =
\inf_{\Pi \in\mathcal{P}^p_s} \| L- \Pi \circ L \|_{F} = \nonumber \\
= \inf_{\Pi \in\mathcal{P}^N_s} \| L- L \circ \Pi \|_{F}= \left\{ \sum_{k=\min(s,r)+1}^{\max(s,r)} \lambda_k^2 \right\}^{1/2}
\end{eqnarray}
and equality is attained in (\ref{eq:optapproxmin}) by projecting onto the $\min(s,r)$ first 
principal components for $ Im \, L$ and $Im \, L^*$.
\end{theorem}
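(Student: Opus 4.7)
The plan is to reduce Theorem \ref{th:approxmin} directly to Theorem \ref{th:approxmax} via a Pythagorean identity in the Frobenius inner product $\langle A, B \rangle_F := \mathrm{tr}(A^* B)$ on operators. The key observation is that for any orthogonal projection $\Pi$ on ${\bf R}^p$ one has
$$
\langle \Pi \circ L, (Id - \Pi) \circ L \rangle_F = \mathrm{tr}\bigl(L^* \Pi (Id - \Pi) L\bigr) = 0,
$$
so that $\|L - \Pi \circ L\|_F^2 = \|L\|_F^2 - \|\Pi \circ L\|_F^2$. Consequently, taking the infimum over $\Pi \in \mathcal{P}^p_s$ of the left hand side is equivalent to taking the supremum of $\|\Pi \circ L\|_F$ over the same class, and Theorem \ref{th:approxmax} identifies this supremum as $\sum_{k=1}^{\min(s,r)} \lambda_k^2$, attained by projecting onto the span of the first $\min(s,r)$ principal components for $Im\, L$. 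Since $\|L\|_F^2 = \sum_{k=1}^r \lambda_k^2$, subtraction yields the advertised value $\sum_{k=\min(s,r)+1}^{r} \lambda_k^2$; with the convention that $\lambda_k = 0$ for $k > r$, the upper limit $\max(s,r)$ appearing in (\ref{eq:optapproxmin}) causes no discrepancy. An entirely symmetric computation handles $\inf_{\Pi \in \mathcal{P}^N_s} \|L - L \circ \Pi\|_F$, using the principal components for $Im\, L^*$.

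For the two-sided infimum I would split
$$
L - \Pi^p \circ L \circ \Pi^N = L \circ (Id - \Pi^N) + (Id - \Pi^p) \circ L \circ \Pi^N
$$
and check that the two summands are orthogonal in the Frobenius inner product: the relevant inner product is $\mathrm{tr}\bigl((Id - \Pi^N) L^* (Id - \Pi^p) L \Pi^N\bigr)$, which by cyclicity of the trace equals $\mathrm{tr}\bigl(\Pi^N (Id - \Pi^N) L^* (Id - \Pi^p) L\bigr) = 0$. Applying the Pythagorean identity twice and simplifying gives
$$
\|L - \Pi^p \circ L \circ \Pi^N\|_F^2 = \|L\|_F^2 - \|\Pi^p \circ L \circ \Pi^N\|_F^2,
$$
so the infimum is again governed by Theorem \ref{th:approxmax}, and the extremizing projections are exactly those that realize the supremum there, namely the projections onto the spans of the first $\min(s,r)$ principal components for $Im\, L$ and $Im\, L^*$.

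The computation is essentially mechanical once the orthogonality observations are in place; the only mildly delicate point — and the main obstacle — is verifying the orthogonality of the two summands in the two-sided decomposition, since it is here that one must use both cyclicity of the trace and the fundamental identity $\Pi^N (Id - \Pi^N) = 0$. Everything else is a direct subtraction from $\|L\|_F^2$, and the attaining projections are inherited verbatim from Theorem \ref{th:approxmax}.
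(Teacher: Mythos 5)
Your argument is correct and complete. Note that the paper itself gives no proof of Theorem \ref{th:approxmin}: it states only that ``all the results in this section are proved using SVDs for the involved operators'' and that they follow from the Lemma asserting $\| \Pi \circ L \|_F \leq \| L \|_F$. So there is no written proof to compare against; what you supply is a legitimate, and essentially canonical, way to fill that gap. Your two orthogonality computations are right ($\Pi(Id-\Pi)=0$ kills the one-sided cross term, and cyclicity of the trace together with $\Pi^N(Id-\Pi^N)=0$ kills the two-sided one), the telescoping $\|L-\Pi^p\circ L\circ\Pi^N\|_F^2=\|L\|_F^2-\|\Pi^p\circ L\circ\Pi^N\|_F^2$ follows, and subtracting the supremum from Theorem \ref{th:approxmax} gives exactly $\sum_{k=\min(s,r)+1}^{r}\lambda_k^2$, which agrees with the stated upper limit $\max(s,r)$ under the convention $\lambda_k=0$ for $k>r$, as you observe. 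The exact identity (not merely an inequality) also correctly transfers the attaining projections from the maximization problem. The one cosmetic point worth flagging is inherited from the paper's own statement rather than introduced by you: when $s>r$ the projection onto the first $\min(s,r)=r$ principal components has rank $r<s$ and so does not literally lie in $\mathcal{P}^p_s$; one should extend it to any rank-$s$ orthogonal projection whose range contains those components, which changes nothing in the computation.
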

We loosely state these results as follows.

\noindent 
{\bf Projection dictum}

\noindent
{\it Projecting onto a set of first principal components maximizes average projected vector length and also
minimizes average projection error.}

We will briefly discuss interpretation for applications. 
In fact in applications the representation of our linear mapping $L$ normally has a 
specific interpretation in the original canonical bases. 
Assume that $L({\bf e_k})$, $k=1,2, \dots , N$ represent samples and that
$L^*({\bf f_j})$, $j=1,2, \dots , p$ represents variables.
To begin with, if the samples are centered, i.e.
$$
\sum_{k=1}^N L({\bf e_k}) =0 \, ,
$$
then $\| L \|_F^2$ corresponds to the statistical {\it variance} of the sample set. 
The basic projection dictum 
can thus be restated for sample-centered data as follows.

\noindent
{\bf Projection dictum for sample-centered data}

\noindent
{\it Projecting onto a set of  first principal components maximizes 
the variance in the set of projected data points and also
minimizes average projection error.}

In applications we are also interested in keeping track of the value
 \begin{equation}\label{eq:SVD}
X_{jk}= ({\bf f_j}, L( {\bf e_k})) \, .
\end{equation}
It represents the $j$'th variable's value in the $k$'th sample. 

Computing in a dual SVD system for ($L$, $L^*$) in (\ref{eq:SVD}) we get
\begin{equation}\label{eq:Xjkestimate}
X_{jk} = \lambda_1 ({\bf e_k}, {\bf u^1})({\bf f_j}, {\bf v^1}) + \cdots + \lambda_r ({\bf e_k}, {\bf u^r})({\bf f_j}, {\bf v^r}) \, .
\end{equation}
Now using (\ref{eq:utov}) and (\ref{eq:vtou}) we conclude that
$$
X_{jk} = \frac{1}{\lambda_1} ({\bf e_k}, L^* ({\bf v^1}))({\bf f_j}, L ({\bf u^1})) + \cdots + \frac{1}{\lambda_r} ({\bf e_k}, L^* ({\bf v^r}))({\bf f_j}, L ({\bf  u^r})) \, .
$$
Finally this implies the fundamental {\bf biplot formula}
\begin{equation}\label{eq:synchSVD}
X_{jk} = \frac{1}{\lambda_1} (L ({\bf e_k}), {\bf v^1})(L^* ({\bf f_j}), {\bf u^1}) + \cdots + \frac{1}{\lambda_r} (L ({\bf e_k}), {\bf v^r})(L^* ({\bf f_j}), {\bf u^r}) \, .
\end{equation}
We now introduce the following scalar product in ${\bf R}^r$
$$
({\bf a}, {\bf b})_{\lambda} := \frac{1}{\lambda_1} a_1 b_1 + \cdots +\frac{1}{\lambda_r} a_r b_r \quad ; \; {\bf a} , {\bf b} \in {\bf R}^r \, .
$$
Equation (\ref{eq:synchSVD}) thus means that if we express the sample vectors in the basis ${\bf v^1}, {\bf v^2}, \dots , {\bf v^r}$ 
for $Im \, L$ and 
the variable vectors in the basis
${\bf u^1}, {\bf u^2}, \dots , {\bf u^r}$ for $Im \, L^*$ , 
then we get the value of $X_{jk}$ simply by taking the $(\cdot, \cdot)_{\lambda}$-scalar product in ${\bf R}^r$  between the coordinate 
sequence for the k'th sample  and the coordinate sequence for the j'th variable. 

This means that  if we work in a synchronized way in ${\bf R}^r$ 
with the coordinates for the samples (with respect to the basis ${\bf v^1}, {\bf v^2}, \dots , {\bf v^r}$) and
with the coordinates for the variables (with respect to the basis ${\bf u^1}, {\bf u^2}, \dots , {\bf u^r}$)  then
the {\bf relative positions} of the coordinate sequence for a variable and the coordinate sequence for a sample in ${\bf R}^r$  have a very precise meaning
given by (\ref{eq:synchSVD}).

Now let $S \subset \{ 1,2, \dots , r \}$ be a subset of indices and let $|S|$ denote the number of elements in $S$. Then let
$\Pi^p_S: {\bf R}^p \longrightarrow {\bf R}^p$ be the orthogonal projection onto the subspace spanned by the
principal components for $Im \, L$ whose indices belong to $S$.
In the same way let $\Pi^N_S: {\bf R}^N \longrightarrow {\bf R}^N$ be the orthogonal projection onto the subspace spanned by the
principal components for $ Im \, L^*$ whose indices belong to $S$.

If $L({\bf e_k})$, $k=1,2, \dots , N$ represent samples, then $\Pi^p_S \circ L({\bf e_k})$, 
$k=1,2, \dots , N$,
represent {\it $S$-approximative samples}, 
and correspondingly if  $L^*({\bf f^j})$, $j=1,2, \dots , p$, represent variables then 
$\Pi_S^N \circ L^* ({\bf f_j})$, $j=1,2, \dots , p$,
represent {\it $S$-approximative variables}.

We will interpret the matrix element
\begin{equation}\label{eq:synchSVDapprox}
X^S_{jk} : = ({\bf f_j}, \Pi_S^p \circ L ({\bf e_k})) 
\end{equation}
as representing  the $j$'th $S$-approximative variable's 
value in the \linebreak $k$'th $S$-approximative sample.

By the biplot formula (\ref{eq:synchSVD}) for the operator $\Pi_S^p \circ L$ we actually have
\begin{equation}\label{eq:synchSVDappr}
X^S_{jk} = \sum_{m \in S} \frac{1}{\lambda_m} (L ({\bf e_k}), {\bf v^m})(L^* ({\bf f_j}), {\bf u^m}) \, .
\end{equation}
If $|S| \leq 3$ we can visualize 
our approximative samples and approximative variables
working in a synchronized way in ${\bf R}^{|S|}$ 
with the coordinates for the approximative samples and
with the coordinates for the approximative variables.
The {\bf relative positions} of the coordinate sequence for an approximative variable 
and the coordinate sequence for an approximative sample in ${\bf R}^{|S|}$  then have the very precise meaning
given by (\ref{eq:synchSVDappr}).

Naturally the information content of a biplot visualization depends in a crucial way on the approximation error we make.
The following result gives the basic error estimates.
\begin{theorem}\label{th:approxerror}
With notations as above we have the following projection error estimates 
\begin{eqnarray}
 \sum_{j=1}^p \sum_{k=1}^N |X_{jk} - X^S_{jk}|^2 = \sum_{ i \notin S} | \lambda_i |^2  \\
\sup_{j=1, \dots ,p \; ; \; k=1, \dots , N\,} | X_{jk} - X^S_{jk}| \leq \sup_{ i \notin S} |\lambda_i| \,.
\end{eqnarray}
\end{theorem}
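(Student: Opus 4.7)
The plan is to reduce both statements to straightforward SVD computations on the difference operator $L-\Pi_S^p\circ L=(\mathrm{Id}-\Pi_S^p)\circ L$, using the orthonormality of the principal components.

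\medskip

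\noindent\textbf{First identity.} I would first note that $\sum_{j,k}|X_{jk}-X^S_{jk}|^2$ is, by definition, the squared Frobenius norm of the matrix of $L-\Pi_S^p\circ L$ in the canonical bases. Using the completed orthonormal basis $\{\mathbf{u^k}\}_{k=1}^N$ of $\mathbf{R}^N$ from the dual SVD system, the Frobenius norm may be computed as
\[
\|(\mathrm{Id}-\Pi_S^p)\circ L\|_F^2=\sum_{k=1}^N \|(\mathrm{Id}-\Pi_S^p)L(\mathbf{u^k})\|^2.
\]
By (\ref{eq:utov}), $L(\mathbf{u^k})=\lambda_k\mathbf{v^k}$ for $k\le r$ and $L(\mathbf{u^k})=0$ for $k>r$. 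Since $\Pi_S^p$ is the orthogonal projection onto $\mathrm{span}\{\mathbf{v^m}:m\in S\}$ and the $\mathbf{v^k}$ are orthonormal, $(\mathrm{Id}-\Pi_S^p)\mathbf{v^k}=\mathbf{v^k}$ when $k\notin S$ and vanishes when $k\in S$. Summing gives exactly $\sum_{i\notin S}\lambda_i^2$ (with the convention that $\lambda_i=0$ for $i>r$, or equivalently restricting the sum to $i\le r$).

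\medskip

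\noindent\textbf{Second inequality.} Here I would go back to the biplot expansion. Subtracting (\ref{eq:synchSVDappr}) from (\ref{eq:synchSVD}) and substituting $L(\mathbf{e_k})=\sum_m(\mathbf{e_k},\mathbf{u^m})L(\mathbf{u^m})=\sum_{m\le r}\lambda_m(\mathbf{e_k},\mathbf{u^m})\mathbf{v^m}$ and the analogous identity for $L^*(\mathbf{f_j})$ yields
\[
X_{jk}-X^S_{jk}=\sum_{\substack{m\le r\\ m\notin S}}\lambda_m(\mathbf{e_k},\mathbf{u^m})(\mathbf{f_j},\mathbf{v^m}).
\]
Setting $\lambda^*:=\sup_{i\notin S}|\lambda_i|$ and applying Cauchy--Schwarz in $\mathbf{R}^{|\{m\le r:m\notin S\}|}$ gives
\[
|X_{jk}-X^S_{jk}|\le\lambda^*\Bigl(\sum_{m\notin S}(\mathbf{e_k},\mathbf{u^m})^2\Bigr)^{1/2}\Bigl(\sum_{m\notin S}(\mathbf{f_j},\mathbf{v^m})^2\Bigr)^{1/2}.
\]
Bessel's inequality applied to the orthonormal families $\{\mathbf{u^m}\}$ in $\mathbf{R}^N$ and $\{\mathbf{v^m}\}$ in $\mathbf{R}^p$ bounds each factor by $\|\mathbf{e_k}\|=1$ and $\|\mathbf{f_j}\|=1$ respectively, delivering the stated sup-norm estimate.

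\medskip

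\noindent\textbf{Main obstacle.} Neither step is really deep; the only point that needs a little care is being explicit about what happens for indices $m\in S$ or $m>r$ (where $\lambda_m$ is zero or undefined) and about the case $|S|>r$, which is handled by the natural convention $\lambda_i=0$ for $i>r$ so that both formulas remain meaningful. Once that bookkeeping is in place, both estimates follow mechanically from the dual SVD together with Bessel's inequality.
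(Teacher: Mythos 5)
Your proof is correct, and it follows exactly the route the paper indicates: the author omits the proof, remarking only that ``all the results in this section are proved using SVDs for the involved operators,'' and your argument --- computing the Frobenius norm of $(\mathrm{Id}-\Pi_S^p)\circ L$ in the dual SVD basis for the first identity, and combining the biplot expansion with Cauchy--Schwarz and Bessel for the sup bound --- is precisely such an SVD computation. (The bookkeeping worry about $m>r$ is moot, since the paper takes $S\subset\{1,\dots,r\}$ and $\lambda_i$ is only indexed up to $r$.)
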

We will use the following statistics for measuring {\it projection content}: 
\begin{definition}
With notations as above, the {\it $L^2$-projection content} connected with
the subset $S$ is by definition
$$
\alpha_2(S) := \frac{\sum_{ i \in S} | \lambda_i |^2 }{\sum_{ i=1}^r | \lambda_i |^2 }\, .
$$

\end{definition}
We note that, in the case when we have sample centered data, $\alpha_2(S)$ is precisely the quotient between 
the amount of variance that
we have "captured" in our projection and the total variance. 
In particular if $\alpha_2(S)=1$
then we have captured all the variance. Theorem \ref{th:approxerror} shows that
we would like to have good control of the distributions of eigenvalues for general
covariance matrices. We will address this issue for random matrices below, but we already 
here point out that we will estimate projection information content, or the signal to noise ratio, in a projection of real world data by comparing the observed $L^2$-projection content and the $L^2$-projection contents for 
corresponding randomized data.

\subsection{Nonlinear PCA and multidimensional scaling}

We begin our presentation of multidimensional scaling 
by looking at  the reconstruction problem,
i.e. how to reconstruct a dataset 
given only a proposed covariance or distance matrix.
In the case of a covariance matrix, 
the basic idea is to try to factor a corresponding sample centered SVD or¨
slightly rephrased by taking the square root of the covariance matrix.

Once we have established a reconstruction scheme we note that we can apply it to any 
proposed "covariance" or "distance" matrix, as long as they have the correct 
structure, even if they are artificial and a priori are not constructed 
using euclidean transformations on an existing data matrix. 
This opens up the possibility for using "any type" of similarity measures
between samples or variables to construct artificial covariance or distance matrices.

We consider a $p \times N$ matrix $X$ where the $N$ columns $\{{\bf x_1}, \dots , {\bf x_N}\}$ consist of values of
measurements for $N$ samples of $p$ variables. We will throughout this section assume that $p \geq N$.
We introduce the $N \times 1$ vector
$$
{\bf 1}=[1,1,\dots, 1]^T \, ,
$$
and we recall that the $N \times N$ covariance matrix of the data matrix 
${\bf X}=[{\bf x_1}, \dots, {\bf x_N}]$ is given as
$$
{\bf C}({\bf x_1}, \dots, {\bf x_N})=({\bf X}- \frac{1}{N}{\bf X}\, {\bf 1}\, {\bf 1}^T)^T({\bf X}- \frac{1}{N}{\bf X}\, {\bf 1}\,{\bf 1}^T) \, .
$$
We will also need the (squared) distance matrix defined by
$$
{\bf D}_{j k} ({\bf x_1}, \dots, {\bf x_N}):= |{\bf x_j}-{\bf x_k}|^2 \quad ; \quad j,k=1,2, \dots, N \, .
$$

We will now consider the problem of reconstructing a data matrix $X$ given only the 
corresponding covariance matrix or the corresponding distance matrix.
We first note that since the covariance and the distance matrix
of a data matrix $X$ both are invariant under euclidean transformations in 
${\bf R}^p$ of the columns of $X$,
it will,  if at all, only be possible to reconstruct the $p \times N$ matrix $X$ modulo
euclidean transformations in ${\bf R^p}$ of its columns.

We next note that the matrices ${\bf C}$ and ${\bf D}$ are connected. In fact we have
\begin{proposition}\label{prop:cvsd}
Given data points ${\bf x_1}, {\bf x_2}, \dots, {\bf x_N}$ in ${\bf R}^p$ and the corresponding 
covariance and distance matrices, ${\bf C}$ and ${\bf D}$, we have that
\begin{equation}\label{eq:dvscmatrix}
{\bf D}_{jk}= {\bf C}_{jj} + {\bf C}_{kk} -2 {\bf C}_{jk} \, .
\end{equation}
Furthermore
\begin{equation}\label{eq:cvsdmatrix}
{\bf C}_{jk}= \frac{1}{2N} \sum_{i=1}^N ({\bf D}_{ij}+ {\bf D}_{ik})-\frac{1}{2}{\bf D}_{jk} - 
\frac{1}{2N^2} \sum_{i,m
=1}^N {\bf D}_{im} \, ,
\end{equation}
or in matrixnotation,
\begin{equation}\label{eq:matrixnotation}
{\bf C}= -\frac{1}{2} \left(I- \frac{{\bf 1}{\bf 1}^T}{N}\right)\,  {\bf D} \,  
\left(I- \frac{{\bf 1}{\bf 1}^T}{N}\right) \, .
\end{equation}
\end{proposition}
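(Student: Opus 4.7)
The plan is to work throughout with the centering matrix
$$H := I - \frac{1}{N}\mathbf{1}\mathbf{1}^{T},$$
whose relevant properties are $H^{T}=H$, $H^{2}=H$ and, crucially, $H\mathbf{1}=0$. With $\bar{\mathbf{x}}:=\frac{1}{N}\sum_{i}\mathbf{x}_{i}$ the sample mean, the centered data matrix is $\tilde{\mathbf{X}}:=\mathbf{X}H$, whose $k$th column is $\mathbf{x}_{k}-\bar{\mathbf{x}}$, so by definition $\mathbf{C}=\tilde{\mathbf{X}}^{T}\tilde{\mathbf{X}}=H\,\mathbf{X}^{T}\mathbf{X}\,H$. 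In particular the $(j,k)$ entry of $\mathbf{C}$ is the Euclidean inner product $(\mathbf{x}_{j}-\bar{\mathbf{x}})^{T}(\mathbf{x}_{k}-\bar{\mathbf{x}})$, and $\mathbf{C}_{jj}=|\mathbf{x}_{j}-\bar{\mathbf{x}}|^{2}$.

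For the first identity (\ref{eq:dvscmatrix}) I would simply write
$$\mathbf{D}_{jk}=|\mathbf{x}_{j}-\mathbf{x}_{k}|^{2}=\bigl|(\mathbf{x}_{j}-\bar{\mathbf{x}})-(\mathbf{x}_{k}-\bar{\mathbf{x}})\bigr|^{2},$$
expand the square, and read off $\mathbf{C}_{jj}+\mathbf{C}_{kk}-2\mathbf{C}_{jk}$ from the previous paragraph. This is a one-line verification and really just the polarisation identity for the centered vectors.

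For the matrix identity (\ref{eq:matrixnotation}) I would pass through the uncentered Gram matrix $G:=\mathbf{X}^{T}\mathbf{X}$. The same polarisation trick, applied to the raw vectors $\mathbf{x}_{j},\mathbf{x}_{k}$, yields
$$\mathbf{D}=\mathbf{1}\,g^{T}+g\,\mathbf{1}^{T}-2G,\qquad g_{i}:=G_{ii}=|\mathbf{x}_{i}|^{2}.$$
Now sandwich both sides with $H$. Since $H\mathbf{1}=0$ and $\mathbf{1}^{T}H=0$, the two rank-one terms $\mathbf{1}\,g^{T}$ and $g\,\mathbf{1}^{T}$ are annihilated, leaving $H\mathbf{D}H=-2\,HGH=-2\mathbf{C}$, which is exactly (\ref{eq:matrixnotation}). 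The only thing to check carefully is this cancellation, and it is immediate from $H\mathbf{1}=0$.

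Finally, (\ref{eq:cvsdmatrix}) is obtained by expanding (\ref{eq:matrixnotation}) entrywise. Writing $H_{ji}=\delta_{ji}-1/N$, one computes
$$(H\mathbf{D}H)_{jk}=\sum_{i,m}\Bigl(\delta_{ji}-\tfrac{1}{N}\Bigr)\Bigl(\delta_{mk}-\tfrac{1}{N}\Bigr)\mathbf{D}_{im}=\mathbf{D}_{jk}-\tfrac{1}{N}\sum_{m}\mathbf{D}_{jm}-\tfrac{1}{N}\sum_{i}\mathbf{D}_{ik}+\tfrac{1}{N^{2}}\sum_{i,m}\mathbf{D}_{im},$$
and dividing by $-2$, together with the symmetry $\mathbf{D}_{jm}=\mathbf{D}_{mj}$, gives (\ref{eq:cvsdmatrix}). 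There is no real obstacle in this proof: the whole content is the identity $H\mathbf{1}=0$ plus the polarisation identity, and the only place one can slip is in keeping the row and column sums of $\mathbf{D}$ correctly labelled when expanding $H\mathbf{D}H$.
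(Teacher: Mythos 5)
Your proof is correct. For the first identity you do exactly what the paper does: center the data, observe that ${\bf C}_{jk}$ is the inner product of the centered vectors, and read off (\ref{eq:dvscmatrix}) from the polarisation identity. For the inversion formula, however, you take a genuinely different route. The paper stays entirely at the level of scalar sums: it sums the polarity relation (\ref{eq:polar}) over both indices and over one index, uses $\sum_j {\bf C}_{jk}=0$ to solve for $|{\bf y}_k|^2$ in terms of row and grand sums of ${\bf D}$, and substitutes back to obtain (\ref{eq:cvsdmatrix}) directly; the matrix form (\ref{eq:matrixnotation}) is then just a restatement. You instead prove the matrix identity first, by writing the uncentered polarisation ${\bf D}={\bf 1}g^T+g{\bf 1}^T-2G$ with $G={\bf X}^T{\bf X}$ and conjugating by the centering projector $H$, so that $H{\bf 1}=0$ annihilates the two rank-one terms and leaves $H{\bf D}H=-2HGH=-2{\bf C}$; the entrywise formula then falls out by expanding $H_{ji}=\delta_{ji}-1/N$. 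The two arguments carry the same mathematical content (polarisation plus double centering), but yours isolates the structural reason the formula works --- the centering matrix kills exactly the rank-one ambiguity in recovering a Gram matrix from squared distances --- at the cost of introducing the uncentered Gram matrix as an intermediate object, while the paper's index computation is more elementary and self-contained but hides this mechanism inside the bookkeeping of the sums. Both are complete; your entrywise expansion and the use of the symmetry of ${\bf D}$ to merge the two single-index sums are carried out correctly.
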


\begin{proof}
Let
$$
{\bf y_i}:= {\bf x_i} - \frac{1}{N}\sum_{j=1}^N {\bf x_j} \quad ; \; i=1,2, \dots , N \, .
$$
Note that 
$$
{\bf C}_{jk}= {\bf y_j^T} \, {\bf y_k} \quad \mbox{and} \quad {\bf D}_{jk}=|{\bf y_j}-{\bf y_k}|^2 \, ,
$$
and that $\sum_{j=1}^N {\bf y_j} =0$.

Equality (\ref{eq:dvscmatrix}) above is simply the polarity condition
\begin{equation}\label{eq:polar}
{\bf D}_{jk}= |{\bf y_j}|^2 + |{\bf y_k}|^2 -2 {\bf C}_{jk} \, .
\end{equation}
Moreover, since
$$
\sum_{j=1}^N {\bf C}_{jk} = 0 \quad \mbox{and} \quad \sum_{k=1}^N {\bf C}_{jk} = 0 \, ,
$$
by summing over both $j$ and $k$ in (\ref{eq:polar}) above we get
\begin{equation}\label{eq:doublesum}
\sum_{j=1}^N |{\bf y_j}|^2 = \frac{1}{2N} \sum_{j,k =1}^N {\bf D}_{jk} \, .
\end{equation}
On the other hand, by summing only over $j$ in (\ref{eq:polar}) we get
\begin{equation}\label{eq:sum}
\sum_{j=1}^N {\bf D}_{jk}= N |{\bf y_k}|^2 + \sum_{j=1}^N |{\bf y_j}|^2 \, .
\end{equation}
Combining (\ref{eq:doublesum}) and (\ref{eq:sum}) we get
$$
|{\bf y_k}|^2 = \frac{1}{N} \sum_{j=1}^N {\bf D}_{jk} - \frac{1}{2N^2}\sum_{j,k =1}^N {\bf D}_{jk} \, .
$$
Plugging this into (\ref{eq:polar}) we finally conclude that
$$
{\bf D}_{jk}= \frac{1}{N} \sum _{i=1}^N ({\bf D}_{ij} + {\bf D}_{ik}) - \frac{1}{N^2} \sum_{i,j=1}^N {\bf D}_{ij}
-2 {\bf C}_{jk} \, .
$$
This is (\ref{eq:cvsdmatrix}).
\end{proof}

Now let $\mathcal{M}_{N \times N}$ denote the set of all real $N  \times N$ matrices.
To reconstruct a $p \times N$ data matrix $X= [{\bf x_1}, \dots , {\bf x_N}]$ 
from a given $N \times N$ covariance or $N \times N$ distance matrix amounts to invert the 
mappings:
$$
\Phi :{\bf R}^p \times \cdots \times {\bf R}^p \ni ({\bf x_1}, {\bf x_2}, \dots, {\bf x_N}) 
\mapsto {\bf C}({\bf x_1}, {\bf x_2}, \dots , {\bf x_N}) \in \mathcal{M}_{N \times N}, 
$$
and 
$$
\Psi: {\bf R}^p \times \cdots \times {\bf R}^p \ni ({\bf x_1}, {\bf x_2}, \dots, {\bf x_N}) 
\mapsto {\bf D}({\bf x_1},{\bf x_2}, \dots , {\bf x_N})\in \mathcal{M}_{N \times N} \, .
$$
In general it is of course impossible to invert these mappings since 
both $\Phi$ and $\Psi$ are far from surjectivity and injectivity.

Concerning injectivity, it is clear that both $\Phi$ and $\Psi$ are invariant under the euclidean group $E(p)$ acting on 
${\bf R}^p \times \cdots \times {\bf R}^p$, i.e.
under transformations 
$$({\bf x_1}, {\bf x_2}, \dots , {\bf x_N}) \mapsto (S {\bf x_1} +{\bf b}, S {\bf x_2} + {\bf b}, \dots , S {\bf x_n} + {\bf b}) \; ,$$ 
where $ {\bf b} \in {\bf R}^p$ and $S \in O(p)$.

This makes it natural to introduce the quotient manifold
$$ ({\bf R}^p \times \cdots \times {\bf R}^p)/ E(p)$$ 
 and possible to define the induced mappings $\overline{{\Phi}}$ and $\overline{{\Psi}}$,
well defined on the equivalence classes and factoring the mappings $\Phi$ and $\Psi$ by the quotient mapping. 
We will write
$$
\overline{{\Phi}}:([{\bf x_1}, {\bf x_2}, \dots, {\bf x_N}]) \mapsto {\bf C}({\bf x_1}, {\bf x_2}, \dots , {\bf x_N}) 
$$
and 
$$
\overline{{\Psi}}: ([{\bf x_1}, {\bf x_2}, \dots, {\bf x_N}]) \mapsto {\bf D}({\bf x_1},{\bf x_2}, \dots , {\bf x_N}) \, .
$$
We shall show below that both $\overline{\Phi}$ and $\overline{\Psi}$ are injective.

Concerning surjectivity of the maps $\Phi$ and $\Psi$, 
or $\overline{{\Phi}}$ and $\overline{{\Psi}}$, we will first describe the image
of $\Phi$. Since the images of $\Phi$ and $\Psi$ are connected through
Proposition \ref{prop:cvsd} above this implicitly describes the image also for $\Psi$.
It is theoretically important that both these image sets turn 
out to be closed and convex subsets of $\mathcal{M}_{N \times N}$.
In fact we claim that the following set is the image set of $\overline{{\Phi}}$:
$$\mathcal{P}_{N \times N} := 
\left\{ A \in \mathcal{M}_{N \times N} \; ; \; A^T=A 
\; ,\; A \geq 0 \; , \; A\, {\bf 1}= 0 \right\} \, .
$$
To begin with it is clear that the image of $\Phi$ is equal to the image of $\overline{{\Phi}}$ and that it is
included in $\mathcal{P}_{N \times N}$, i.e.
$$
({\bf R}^p \times \cdots {\bf R}^p)/ E(p) \ni ([{\bf x_1}, {\bf x_2}, \dots, {\bf x_N}]) 
\mapsto {\bf C}({\bf x_1},{\bf x_2}, \dots , {\bf x_N}) \in \mathcal{P}_{N \times N} \, .
$$
The following proposition implies that $\mathcal{P}_{N \times N}$ is the image set of $\overline{{\Phi}}$
and it is the main result of this subsection.
\begin{proposition}
The mapping $\overline{{\Phi}}$:
$$
({\bf R}^p \times \cdots \times {\bf R}^p)/ E(p) \ni ([{\bf x_1}, {\bf x_2}, \dots, {\bf x_N}]) 
\mapsto {\bf C}({\bf x_1},{\bf x_2}, \dots , {\bf x_N}) \in \mathcal{P}_{N \times N} \, 
$$
is a bijection.
\end{proposition}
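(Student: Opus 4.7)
The plan is to prove the bijection by separately establishing surjectivity and injectivity; well-definedness of $\overline{\Phi}$ on the quotient is automatic from the invariance of the covariance matrix under $E(p)$, which was already noted. The key algebraic input is a symmetric square root of a positive semidefinite matrix (spectral theorem), and the geometric input is the standard fact that a linear isometry between subspaces of $\mathbb{R}^p$ of equal dimension extends to an element of $O(p)$.

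For surjectivity, I would start with an arbitrary $A \in \mathcal{P}_{N\times N}$ and, using the spectral theorem, produce the symmetric positive semidefinite square root $Y := A^{1/2}$, so that $Y^T Y = Y Y = A$. The crucial point is that $A \mathbf{1}=0$ forces $A^{1/2} \mathbf{1}=0$: since $A$ and $A^{1/2}$ are simultaneously diagonalizable with eigenvalues $\lambda_i$ and $\sqrt{\lambda_i}$ respectively, they share the same kernel. Hence the columns of $Y \in \mathcal{M}_{N \times N}$ sum to zero. Using $p \geq N$, I embed these columns into $\mathbb{R}^p$ to get a tuple $({\bf x_1},\dots,{\bf x_N})$ which is already centered, so its centered data matrix is $Y$ and its covariance is $Y^T Y = A$.

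For injectivity, assume two tuples $({\bf x_1},\dots,{\bf x_N})$ and $({\bf x'_1},\dots,{\bf x'_N})$ produce the same covariance $A$. Writing $Y$ and $Y'$ for the corresponding $p\times N$ centered data matrices, the hypothesis becomes $Y^T Y = (Y')^T Y' = A$. I would then define a linear map $S$ from $\operatorname{Im} Y$ to $\operatorname{Im} Y'$ by $S(Y\mathbf{v}) := Y'\mathbf{v}$. Well-definedness and the isometry property both follow at once from $Y^T Y = (Y')^T Y'$: if $Y\mathbf{v}=Y\mathbf{w}$ then $\mathbf{v}-\mathbf{w}$ lies in $\ker(Y^T Y) = \ker(Y')^T Y'$ and hence in $\ker Y'$; and $(Y'\mathbf{v}, Y'\mathbf{w}) = \mathbf{v}^T A \mathbf{w} = (Y\mathbf{v}, Y\mathbf{w})$. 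Since $\dim \operatorname{Im} Y = \dim \operatorname{Im} Y' = \operatorname{rank}(A)$, their orthogonal complements in $\mathbb{R}^p$ have the same dimension and $S$ extends to an element of $O(p)$. Then $Y'=SY$, and translating by means gives ${\bf x'_i} = S{\bf x_i} + {\bf b}$ with ${\bf b} = \bar{\bf x}' - S\bar{\bf x}$, so the two tuples lie in the same $E(p)$-orbit.

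The main obstacle is the injectivity step, and specifically checking that the candidate isometry $S$ is well-defined; the delicate identity $\ker Y \subseteq \ker Y'$ is not immediately visible from $Y^T Y = (Y')^T Y'$ and requires the little observation that $\ker(M^T M) = \ker M$ for any real matrix $M$. Once this is in hand, both the isometry property and the extension to $O(p)$ are routine, and surjectivity reduces to the remark that the square root of $A$ inherits the annihilation of $\mathbf{1}$.
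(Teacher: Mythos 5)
Your proof is correct and follows essentially the same route as the paper: surjectivity via the symmetric positive square root of $A$ (noting that $A\,\mathbf{1}=0$ passes to the square root) followed by an isometric embedding of its columns into $\mathbf{R}^p$ using $p\geq N$, and injectivity via the fact that equal Gram matrices force the two centered point sets to differ by an element of $O(p)$. The only cosmetic difference is that you construct the orthogonal map by defining it on $\operatorname{Im} Y$ and extending to the orthogonal complement, whereas the paper invokes simultaneous Gram--Schmidt on the two point sets; your version also makes explicit the observation $\ker(M^{T}M)=\ker M$ that the paper leaves implicit.
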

\begin{proof}
If ${\bf A} \in \mathcal{P}_{N \times N}$ we can, by the spectral theorem, 
find a unique symmetric and positive $N \times N$ matrix
${\bf B}=[{\bf b}_1, {\bf b}_2 , \dots , {\bf b}_N]$ (the square root of ${\bf A}$) with $rank({\bf B})= rank({\bf A})$ 
such that ${\bf B}^2={\bf A}$ and ${\bf B}\, {\bf 1}=0$.

We now map the points $({\bf b}_1, {\bf b}_2 , \dots , {\bf b}_N)$ laying in ${\bf R}^N$ isometrically to points 
$({\bf x}_1,{\bf x}_2, \dots , {\bf x}_N)$ in ${\bf R}^p$. 
This is trivially possible since $p \geq N$.
The corresponding covariance matrix ${\bf C}({\bf x}_1, \dots , {\bf x}_N)$ will be equal to ${\bf A}$.
This proves surjectivity. That the mapping  $\overline{{\Phi}}$ is injective follows directly from the following lemma.
\begin{lemma}
Let $\{ {\bf y}_k\}_{k=1}^N$ and $\{\tilde{\bf y}_k\}_{k=1}^N$  be two sets of vectors in ${\bf R}^p$. 
If 
$$
{\bf y}^T_k {\bf y}_j = \tilde{\bf y}^T_k \tilde{\bf y}_j \quad \mbox{for} \; j,k=1,2, \dots , N \, ,
$$
then there exists an ${\bf S} \in {\bf O}(p)$ such that
$$
{\bf S} ({\bf y}_k) = \tilde{\bf y}_k \quad \mbox{for} \; k=1,2, \dots , N \, .
$$
\end{lemma}
\begin{proof}
Use the Gram-Schmidt orthogonalization procedure on both sets at the same time.
\end{proof}
\end{proof}

We will in our explorations of high dimensional real world datasets 
below use  "artificial" distance matrices constructed from 
geodesic distances on carefully created graphs connecting the samples or the variables.
These distance matrices are converted to unique corresponding covariance matrices
which in turn, as described above, 
give rise to canonical elements in  
$({\bf R}^p \times \cdots \times {\bf R}^p)/ E(p)$.
We then pick sample centered representatives on which we perform PCA.
In this way we can visualize low dimensional "approximative graph distances" in the dataset. 
Using graphs in the sample set constructed from a $k$ nearest neighbors or a locally euclidean approximation procedure,
 this approach corresponds to the ISOMAP algorithm 
introduced by Tenenbaum et. al. \cite{Tenen}. 
The ISOMAP algorithm can as we will see below be very useful in the exploration of DNA microarray data, 
see Nilsson et. al. \cite{Nils}
for one of the first applications of ISOMAP in this field.

We finally remark that if a proposed 
artificial distance or covariance matrix does not have the correct structure,
i.e. if for example a proposed covariance matrix does not belong to $\mathcal{P}_{N \times N}$,
we begin by projecting the proposed covariance matrix onto 
the unique nearest point in the closed and convex set $\mathcal{P}_{N \times N}$ 
and then apply the scheme presented above to that point.

\section{The basic statistical framework.}

We will here fix some notation and for the non-statistician reader's convenience 
at the same time recapitulate some standard multivariate statistical theory.
In particular we want to stress some basic facts concerning robustness of statistical testing.

Let $\mathcal{S}$ be the sample space consisting of all possible samples (in our example datasets equal to all trials of patients)
equipped with a probability measure $P:2^{\mathcal{S}} \longrightarrow [0, +\infty]$ 
and let $X=(X_1, \dots, X_p)^T$ be a random vector from 
$\mathcal{S}$ into $\bf{R}^p$. 
The coordinate functions $X_i: \mathcal{S} \longrightarrow \bf{R}$, $i=1,2,\dots, p$ are random variables and in our example datasets they represent the expression levels of the different genes. 

We will be interested in the law of $X$, i.e. the induced probability measure $P(X^{-1}(\cdot))$ defined on the measurable 
subsets of ${\bf{R}^p}$.
If it is absolutely continuous with respect to Lebesgue measure then there exists a probability density function (pdf)
$f_X(\cdot): \bf{R}^p \longrightarrow [0, \infty) $  that belongs to $\mathcal{L}^1({\bf R}^p)$ and satisfies
\begin{equation}
P(\{s \in \mathcal{S} \; ; \; X(s) \in A\}) = \int_A f_X(x) \, dx
\end{equation}
for all events (i.e. all Lebesgue measurable subsets) $A \subset \bf{R}^p$.
This means that the pdf $f_X(\cdot)$ contains all necessary information in order to 
compute the probability that an event has occurred, i.e. that 
the values of $X$ belong to a certain given set $A \subset \bf{R}^p$.

All statistical inference procedures are concerned with trying to learn as much as possible 
about an at least partly unknown induced probability measure 
$P(X^{-1}(\cdot))$ from a given set of $N$ observations,
$\{ {\bf x^1}, {\bf x^2}, \dots , {\bf x^N} \} $ (with $ {\bf x}^i \in {\bf R}^p$ for $i=1,2, \dots , N$), of 
the underlying random vector $X$.

Often we then assume that we know something about the {\it structure} of the corresponding pdf $f_X(\cdot)$ and we try to
make statistical inferences about the {\it detailed form} of the function $f_X(\cdot)$.

The most important probability distribution in multivariate statistics is the multivariate normal distribution.
In $\bf{R}^p$ it is given by the $p$-variate 
pdf $n:\bf{R}^p \longrightarrow (0, \infty)$ where
\begin{equation}\label{eq:normalpdf}
n(x):= (2\pi)^{-p/2} | \Gamma |^{-1/2} e^{-\frac{1}{2}(x- \mu)^T \Gamma^{-1} (x- \mu)} \quad; \; x \in \bf{R}^p \, .
\end{equation}
It is characterized by 
the symmetric and positive definite $p \times p$ matrix $\Gamma$ and the $p$-column vector $\mu$, and 
$| \Gamma |$ stands for the absolute value of the determinant of $\Gamma$.
If a random vector $X:\mathcal{S} \longrightarrow \bf{R}^p$ has the $p$-variate 
normal pdf (\ref{eq:normalpdf}) we say that $X$ has the ${\bf N}({\bf \mu}, \Gamma)$ distribution.
If  $X$ has the ${\bf N}({\bf \mu}, \Gamma)$ distribution then 
{\bf the expected value} of $X$ is equal to ${\bf \mu}$ i.e. 
\begin{equation}
\mathcal{E}(X):= \int_{\mathcal{S}} X(s) \, dP= {\bf \mu} \, ,
\end{equation}
 and {\bf the covariance matrix} of $X$ is equal to $\Gamma$, i.e.
 \begin{equation}
 \mathcal{C}(X) := \int_S (X - \mathcal{E}(X)) (X- \mathcal{E}(X))^T \, dP = \Gamma \, . 
 \end{equation}
Assume now
that $X^1, X^2, \dots , X^N$ are given independent and identically distributed (i.i.d.) random vectors.
A {\bf test statistic} $\mathcal{T}$ is then a function 
$(X^1, X^2, \dots , X^N) \mapsto \mathcal{T}(X^1, X^2, \dots , X^N)$.
Two important test statistics are {\bf the sample mean} vector of a sample of size $N$ 
$$\overline{X}^N:= \frac{1}{N} \sum_{i=1}^N X^i \, ,$$
and {\bf the sample covariance matrix} of a sample of size $N$
$$S^N:= \frac{1}{N-1} \sum_{i=1}^N (X^i-\overline{X})(X^i-\overline{X})^T \, .$$

If $X^1, X^2, \dots , X^N$ are independent and ${\bf N}({\bf \mu}, \Gamma)$ distributed, then
the mean $\overline{X}^N$ has the 
${\bf N}({\bf \mu}, \frac{1}{N} \Gamma)$ distribution.
In fact this result is asymptotically robust with respect to the underlying distribution. 
This is a consequence of the well known and celebrated central limit theorem:
\begin{theorem}
 If the random $p$ vectors $X^1, X^2, X^3, \dots $ 
are independent and identically distributed with means $\mu \in \bf{R}^p$ and covariance matrices $\Gamma$, then
the limiting distribution of
$$
(N)^{1/2} \left( \overline{X}^N-\mu\right)
$$
as $N \longrightarrow \infty$ is ${\bf N}(0, \Gamma)$.
\end{theorem}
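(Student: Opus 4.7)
The plan is to deduce the multivariate statement from the one-dimensional central limit theorem via the Cram\'er--Wold device, which says that a sequence $Y^N$ of random $p$-vectors converges in distribution to $Y$ if and only if $t^T Y^N \to t^T Y$ in distribution for every fixed $t \in {\bf R}^p$. This reduction is the cleanest route because it lets us import the scalar CLT as a black box and focus on the (easy) variance bookkeeping.

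First I would fix an arbitrary $t \in {\bf R}^p$ and set $Z^i := t^T(X^i - \mu)$. Since $X^1, X^2, \ldots$ are i.i.d.\ with mean $\mu$ and covariance $\Gamma$, the $Z^i$ are i.i.d.\ real random variables with mean zero and variance
$$
\mathcal{E}((Z^i)^2) = \mathcal{E}\left(t^T(X^i-\mu)(X^i-\mu)^T t\right) = t^T \Gamma t.
$$
The classical one-dimensional central limit theorem then gives
$$
\sqrt{N}\cdot \frac{1}{N}\sum_{i=1}^N Z^i \;=\; t^T \sqrt{N}\left(\overline{X}^N - \mu\right) \;\longrightarrow\; {\bf N}(0, t^T \Gamma t)
$$
in distribution as $N \to \infty$. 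On the other hand, if $Y \sim {\bf N}(0, \Gamma)$, then $t^T Y \sim {\bf N}(0, t^T \Gamma t)$ by the elementary transformation rule for normal vectors under linear maps, so the two limits agree. Since $t$ was arbitrary, the Cram\'er--Wold device yields $\sqrt{N}(\overline{X}^N - \mu) \longrightarrow {\bf N}(0, \Gamma)$ in distribution.

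If one prefers to avoid invoking Cram\'er--Wold and the scalar CLT as a black box, I would instead work directly with characteristic functions. Writing $\varphi(t) := \mathcal{E}(e^{i t^T (X^1 - \mu)})$ and using independence, the characteristic function of $\sqrt{N}(\overline{X}^N - \mu)$ equals $\varphi(t/\sqrt{N})^N$. Finiteness of second moments gives the Taylor expansion $\varphi(s) = 1 - \tfrac{1}{2} s^T \Gamma s + o(|s|^2)$ at the origin, so
$$
\varphi(t/\sqrt{N})^N = \left(1 - \frac{t^T \Gamma t}{2N} + o(1/N)\right)^N \longrightarrow e^{-\frac{1}{2} t^T \Gamma t},
$$
which is the characteristic function of ${\bf N}(0, \Gamma)$; L\'evy's continuity theorem then closes the argument. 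The main technical obstacle in either route is the same: justifying the second-order Taylor expansion of $\varphi$ rigorously under the sole assumption of finite second moments (rather than the easier case of finite third moments, where a direct estimate works). This is handled by the standard lemma that $|e^{iy} - 1 - iy + y^2/2| \le \min(|y|^2, |y|^3/6)$ combined with dominated convergence, and once that is in hand the remaining estimates are routine.
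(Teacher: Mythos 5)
Your proposal is correct: the Cram\'er--Wold reduction to the scalar CLT, with the characteristic-function argument as a self-contained alternative, is the standard textbook proof of the multivariate central limit theorem, and your handling of the variance computation and the second-order Taylor expansion of the characteristic function is sound. Note, however, that the paper offers no proof of this statement at all --- it is quoted as ``the well known and celebrated central limit theorem'' and used as a black box --- so there is no internal argument to compare yours against; your write-up simply supplies the classical proof the paper omits. The only minor point worth flagging is the degenerate direction case $t^T \Gamma t = 0$, where the limiting law is the point mass at the origin; this is still consistent with the Cram\'er--Wold device and with ${\bf N}(0,\Gamma)$ possibly being a singular Gaussian, but a careful write-up should say so explicitly.
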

The central limit theorem tells us that, if we know nothing and still  
need to assume some structure on the underlying p.d.f., then asymptotically the $N(\mu, \Gamma)$
distribution is the only reasonable assumption. 
The distributions of different statistics are of course more or less sensitive to the underlying distribution.
In particular the standard univariate Student t-statistic, 
used to draw inferences about a univariate sample mean, 
is very robust with respect to the underlying probability distribution.
In for example the study on statistical robustness \cite{RTG} the authors conclude that:

 {\it "...the two-sample t-test is so robust that it can be recommended in nearly all applications."}
 
This is in contrast with many statistics connected with the sample covariance matrix.
A central example in multivariate analysis
is the set of eigenvalues of the sample covariance matrix. 
These statistics have a more complicated behavior.
First of all, 
if $X^1, X^2, \dots , X^N$ with values in ${\bf R}^p$ are 
independent and ${\bf N}({\bf \mu}, \Gamma)$ distributed then 
the sample covariance matrix
is said to have a Wishart distribution ${\bf W}_p(N, \Gamma)$. 
If $N >p \,$  the Wishart distribution 
is absolutely continuous with respect to Lebesgue measure and the probability density function is explicitly known,
see e.g. Theorem 7.2.2. in \cite{AN}.
If $N >> p$
then the  eigenvalues of the sample covariance matrix are good estimators 
for the corresponding eigenvalues of the underlying covariance matrix $\Gamma$, see \cite{AN1} and \cite{AN}. 
In the applications we have in mind we often have the reverse situation, i.e. $p >> N$, and then the eigenvalues for the 
sample covariance matrix are far from consistent estimators for the corresponding eigenvalues of the underlying covariance matrix.
In fact if the underlying covariance matrix is the identity matrix 
it is known (under certain growth conditions on the underlying distribution) that 
if we let $p$ depend on $N$ and if $p/N \longrightarrow \gamma \in (0, \infty)$ as $N \longrightarrow \infty$, 
then the largest eigenvalue for the sample covariance matrix tends to $ ( 1+ \sqrt{\gamma} )^2$, see e.g. \cite{YBK},
and not to $1$ as one maybe could have expected. 
This result is interesting and can be useful, but
there are many open questions concerning 
the asymptotic theory for the "large $p$, large $N$ case", in particular if we go beyond the case of 
normally distributed data, see e.g. \cite{BAI}, \cite{DIAC}, \cite{IMJ1}, \cite{IMJ2} and \cite{KARO} for an overview of the current state of the art.
To estimate  the information content or signal to noise ratio in our PCA plots
we will therefore rely mainly on randomization tests and 
not on the (not well enough developed) 
asymptotic theory for the distributions of eigenvalues of random matrices.

\section{Controlling the false discovery rate}

When we perform for example a Student t-test to estimate whether or not 
two groups of samples have the same mean value for a specific variable we are
performing a hypothesis test. 
When we do the same thing for a large number of variables at the same time
we are testing one hypothesis for each and every variable. 
It is often the case in the applications we have in mind 
that tens of thousands of features are tested at the same time 
against some null hypothesis $H_0$, e.g. that the mean values in two given groups are identical.
To account for this multiple hypotheses testing, several methods have been proposed, 
see e.g. \cite{Yin} for an overview and comparison of some existing methods.
We will give a brief review of some basic notions.

Following the seminal paper by Benjamini and Hochberg \cite{BH1},
 we introduce the following notation.
We consider the problem of testing $m$ null hypotheses $H_0$ 
against the alternative hypothesis $H_1$. 
We let $m_0$ denote the number of true nulls.
We then  let $R$ denote the total number of rejections, which we will call 
the total number of statistical discoveries, 
and let $V$ denote the 
number of false rejections. 
In addition we introduce stochastic variables $U$ and $T$ according to 
Table \ref{table:falsediscovery}.

\begin{table}[h!]
\caption{Test statistics} 
\centering 
\begin{tabular}{c c c c} 
\hline\hline 
& Accept $H_0$ & Reject $H_0$& Total \\ [0.5ex] 
\hline 
$H_0$ true	& U& V& $m_0$ \\
$H_1$ true	& T& S & $m_1$ \\
	& $m-R$ & R & $m$ \\
[1ex] 
\hline 
\end{tabular}
\label{table:falsediscovery} 
\end{table}

The false discovery rate was loosely defined by Benjamini and Hochberg as
the expected value $E(\frac{V}{R})$. 
More precisely the false discovery rate is defined as
\begin{equation}
FDR:= E(\frac{V}{R} \, | \, R > 0) \, P( R>0) \, .
\end{equation}
The false discovery rate measures the proportion of Type I errors among 
the statistical discoveries. 
Analogously we define corresponding
statistics according to Table \ref{table:discoveryrates}.
\begin{table}[h!]
\caption{Statistical discovery rates} 
\centering 
\begin{tabular}{c c } 
\hline\hline 
Expected value & Name\\ [0.5ex] 
\hline 
$E(V/R)$& False discovery rate (FDR)\\
$E(T/(m-R))$& False negative discovery rate (FNDR)\\
$E(T/(T+S))$& False negative rate (FNR)\\
$E(V/(U+V))$& False positive rate (FPR)\\
[1ex] 
\hline 
\end{tabular}
\label{table:discoveryrates} 
\end{table}
We note that the FNDR is precisely the proportion of Type II errors among the accepted null hypotheses 
, i.e. the non-discoveries.
In the datasets that we encounter within bioinformatics we often suspect
$m_1 << m$ and so if $R$, which we can observe, is relatively small, then the 
FNDR is controlled at a low level. 
As pointed out in \cite{Pawi}, apart from the FDR which measures the proportion of false positive discoveries,
we usually are interested in also controlling the FNR, 
i.e. we do not want to miss too many 
true statistical discoveries. 
We will address this by using visualization and knowledge based evaluation
to support the statistical analysis. 

In our exploration scheme presented in the next section 
we will use the step down procedure on the entire list of p-values for the
statistical test under consideration suggested by Benjamini and Hochberg in \cite{BH1}
to control the FDR.
We will also use the $q$-value, computable for each separate variable,
 introduced by Storey, see \cite{STO} and \cite{STT}. 
The $q$-value in our analyses is defined as the lowest FDR for which the particular hypothesis under consideration would be accepted under the Benjami-Hochberg step down procedure.
A practical and reasonable threshold level for the $q$-value to be informative that we will use is $q<0.2$.

\section{The basic exploration scheme}
We will look for significant signals in our data set in order to use them e.g. as a basis for 
variable selection, sample classification and clustering.

If there are enough samples one should first of all randomly partition the set of samples 
into a {\it training set} and a {\it testing set}, 
perform the analysis with the training set and then validate findings using the testing set. 
This should then ideally be repeated several times with different partitions.
With very few samples this is not
always feasible and then, in addition to 
statistical measures, one is left with  using knowledge based evaluation.
One should then remember that the 
ultimate goal of the entire exploration is to add pieces of new knowledge to an already 
existing knowledge structure. To facilitate knowledge based evaluation,
 the entire exploration scheme
is throughout guided by visualization using PCA biplots.

When looking for significant signals in the data, 
one overall rule that we follow is:
\begin{itemize}
\item{Detect and then remove the strongest present signal.}
\end{itemize}
Detect a signal can e.g. mean to find a sample cluster and 
a connected list of variables that discriminate the cluster.
We can then for example (re)classify the sample cluster in order to use the new classification to
perform more statistical tests.
After some statistical validation we then often remove a detected signal, e.g. a sample cluster, in order to avoid that 
a strong signal obscures a weaker but still detectable signal in the data.
Sometimes it is of course convenient to add the strong signal again at a later stage in order to 
use it as a reference.

We must constantly be aware of  the possibility of outliers or artifacts in our data and so we
must:
\begin{itemize}
\item{Detect and remove possible artifacts or outliers.}
\end{itemize}
An artifact is by definition 
a detectable signal that is unrelated to the basic mechanisms that we are exploring. 
An artifact can e.g. 
be created by different experimental setups, resulting in a signal in the data that 
represents different experimental conditions. 
Normally if we detect a suspected 
artifact we want to, as far as possible, eliminate the influence of the suspected artifact 
on our data. 
When we do this we must be aware that we normally 
reduce the degrees of freedom in our data.
The most common case is to eliminate a 
single nominal factor resulting in a splitting of our data in subgroups.
In this case we will 
mean-center each group discriminated by the nominal factor, 
and then analyze the data as usual, with an adjusted number of degrees of freedom.

The following basic exploration scheme is used
\begin{itemize}
\item{Reduce noise by PCA and variance filtering. 
Assess the signal/noise ratio in various low dimensional PCA projections 
and estimate the projection information contents by randomization.} 
\item{Perform statistical tests. Evaluate the statistical tests using the FDR, 
randomization and permutation tests.}
\item{Use graph-based multidimensional scaling (ISOMAP) to search for signals/clusters.}
 \end{itemize}
The above scheme is iterated until "all" significant signals are found and it is 
guided and coordinated by synchronized PCA-biplot visualizations.

\section{Some biological background concerning the example datasets.}
The rapid development of new biological measurement methods makes it possible to explore
several types of genetic alterations in a high-throughput manner.
Different types of microarrays enable researchers to simultaneously monitor the expression levels of
tens of thousands of genes. 
The available information content concerning genes, gene products and regulatory pathways 
is accordingly growing steadily.
Useful bioinformatics databases today include the Gene Ontology project (GO) \cite{AS} and 
the Kyoto Encyclopedia of Genes and Genomes (KEGG) \cite{KA} which are  initiatives with the aim of 
standardizing the representation of genes, gene products and pathways across species and databases.
A substantial collection of functionally related gene sets can also be found at the Broad Institute's 
Molecular Signatures Database (MSigDB) \cite{MSig} together with the implemented
computational method Gene Set Enrichment Analysis (GSEA) \cite{MO}, \cite{SU}. 
The method GSEA is designed to determine whether an a priori defined set of genes shows statistically
significant and concordant differences between two biological states in a given dataset.

Bioinformatic data sets are often uploaded by researchers to sites such as the National Centre for Biotechnology Information's
database Gene Expression Omnibus (GEO) \cite{GEO} or to the European Bioinformatics Institute's database ArrayExpress \cite{ARR}.
In addition data are often made available at local sites maintained by separate institutes or universities.

\section{Analysis of microarray data sets}
There are many bioinformatic and statistical challenges that remain unsolved or are only partly solved 
concerning microarray data. As explained in  \cite{Rocke}, these include normalization, variable selection, classification and clustering. 
This state of affairs is partly due to the fact that we know very little in general 
about underlying statistical distributions. 
This makes statistical robustness a key issue concerning all proposed statistical methods in this field 
and at the same time
shows that new methodologies must always be evaluated using a knowledge based approach and 
supported by accompanying new biological findings.
We will not comment on the important problems of 
normalization in what follows but refer to e.g. \cite{Autio} 
where different normalization procedures
for the Affymetrix platforms are compared.
In addition, microarray data often have a non negligible amount of missing values. 
In our example data sets we will, when needed, impute missing values using the K-nearest 
neighbors method as described in \cite{Troy}.
All visualizations and analyses 
are performed using the software Qlucore Omics Explorer \cite{Q}.

\subsection{Effects of cigarette smoke on the human epithelial cell transcriptome.}
We begin by looking at a gene expression dataset coming from the study by Spira et. al. \cite{Spira} 
of effects of cigarette smoke on the human epithelial cell transcriptome. 
It can be downloaded from 
National Center for Biotechnology Informations (NCBI)
 Gene Expression Omnibus (GEO)
(DataSet GDS534, accession no. GSE994).
It contains measurements from 75 subjects consisting
of 34 current smokers, 18 former smokers and 23 healthy never smokers. 
The platform used to collect the data was 
Affymetrix HG-U133A Chip using the Affymetrix Microarray suite to select, prepare and normalize the data,
see \cite{Spira} for details.

One of the primary goals of the investigation in \cite{Spira} was to find genes that
are responsible for distinguishing between current smokers and never smokers and also
investigate how these genes behaved when a subject quit smoking 
by looking at the expression levels 
for these genes in the group of former smokers. 
We will here focus on finding genes that discriminate the groups of 
current smokers and never smokers.

\begin{itemize}
\item {\bf We begin our exploration scheme by 
estimating the signal/noise ratio in a sample PCA projection based on 
the three first principal components.}
\end{itemize}
We use an SVD on the data correlation matrix, 
i.e. the covariance matrix for the variance normalized variables. 
In Figure \ref{fig:possibleartifactdiagnose}
we see the first three principal components for $Im \, L$ and the 75 patients plotted.
\begin{figure}[h!]
\centering
\includegraphics[width=0.5\textwidth]{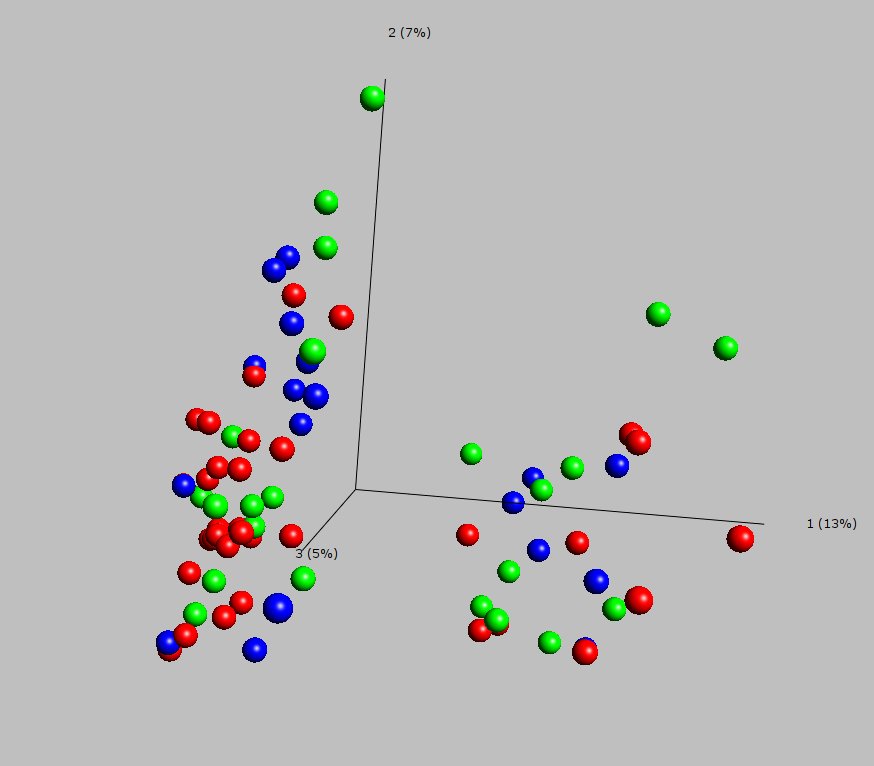}
\caption{The 34 current smokers (red), 
18 former smokers (blue) and 23 never smokers (green) projected onto the three first principal components. 
The separation into two groups is not associated with any supplied clinical annotation and is thus a suspected artifact.}\label{fig:possibleartifactdiagnose}
\end{figure}
The first three principal components contain $25\%$ 
of the total variance in the dataset and so for this $3$-D projection 
$\alpha_2(\{1,2,3\},obsr)=0.25$. 
Using randomization we estimate the expected value for a corresponding dataset 
(i.e. a dataset containing the same number of samples and variables) built on independent and normally distributed  
variables to be approximately $\alpha_2(\{1,2,3\},rand)=0.035$.
We have thus captured around $7$ times more variation  than what we would have expected if the variables
were independent and normally distributed. This indicates that we do have strong signals 
present in the dataset.
\begin{itemize}
\item {\bf Following our exploration scheme we now look for possible outliers and artifacts.}
\end{itemize}
\begin{figure}[h!]
\centering
\includegraphics[width=0.5\textwidth]{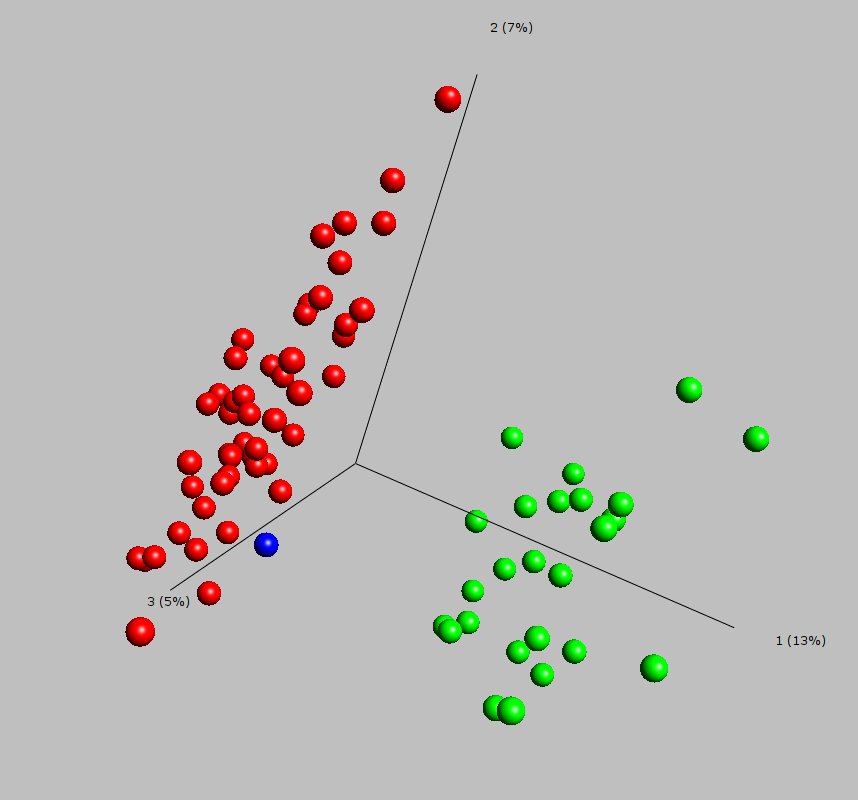}
\caption{The red samples have high description numbers ($\geq 58$) and the green samples have low  description numbers ($\leq 54$).
The blue sample has number $5$.}\label{fig:possibleartifacts}
\end{figure}
The projected subjects are colored according to smoking history, 
but it is clear from Figure \ref{fig:possibleartifactdiagnose} that 
most of the variance in the 
first principal component (containing $13\%$ of the total variance in the data)
comes from a signal that has a very weak association with smoking history.
We nevertheless see a clear splitting into two subgroups. 
Looking at supplied clinical annotations one can conclude that 
the two groups are not associated to gender, age or race traits. 
Instead one finds that all the subjects in one of the groups have low subject description numbers
whereas all the subjects except one in the other group have high subject description numbers. 
In Figure \ref{fig:possibleartifacts} we have colored the subjects according to description number.
This suspected artifact signal 
does not correspond to any in the
dataset (Dataset GDS 534, NCBIs GEO) 
supplied clinical annotation.
One can hypothesize that the description number could reflect 
for instance the order in which the samples were gathered
and thus could be an artifact.
Even if the two groups actually correspond to some interesting clinical variable, like disease state,
that we should investigate separately,
we will consider the splitting to be an artifact in our investigation.
We are interested
in using all the assembled data 
to look for genes that discriminate between current smokers and never smokers. 
We thus eliminate the suspected artifact by mean-centering the two main (artifact) groups.
After elimination of the strong artifact signal, 
the first three principal components contain "only" $17\%$ of the total variation.
\begin{itemize}
\item {\bf Following our exploration scheme we filter the genes with respect to 
variance visually searching for a possibly informative three dimensional projection.}
\end{itemize}
\begin{figure}[h!]
\centering
\includegraphics[width=0.5\textwidth]{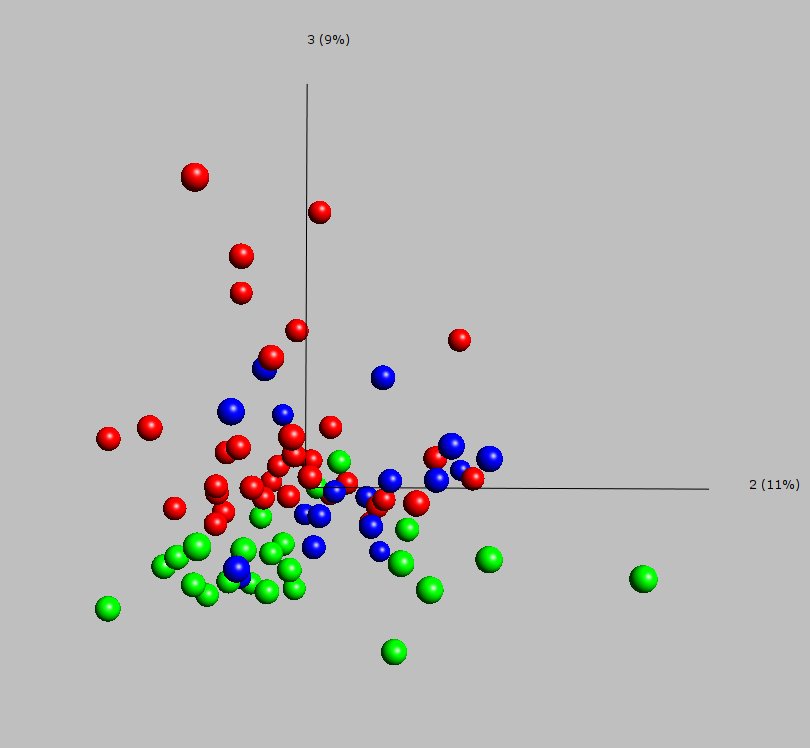}
\caption{We have filtered by variance keeping the $630$ most variable genes. 
It is interesting to see that the third principle component containing $9\%$ of the total variance separates the current smokers (red) from the never smokers (green) quite well.}\label{fig:variancefilter}
\end{figure}

\begin{table}[h!]
\caption{Top genes upregulated in the current smokers group and downregulated in the never smokers group.} 
\centering 
\begin{tabular}{c c } 
\hline\hline 
Gene symbol& $q$-value \\ [0.5ex] 
\hline 

NQO1	& 5.59104067771824e-08 \\
GPX2	& 2.31142232391279e-07 \\
ALDH3A1	 & 2.31142232391279e-07 \\
CLDN10	& 3.45691439169953e-06 \\
FTH1	& 4.72936617815058e-06 \\
TALDO1	& 4.72936617815058e-06 \\
TXN	& 4.72936617815058e-06 \\
MUC5AC	& 3.77806345774405e-05 \\
TSPAN1	& 4.50425200297664e-05 \\
PRDX1	& 4.58227420582093e-05 \\
MUC5AC	& 4.99131989472012e-05 \\
AKR1C2	& 5.72678146958168e-05 \\
CEACAM6	& 0.000107637125805187 \\
AKR1C1	& 0.000195523829628407 \\
TSPAN8	& 0.000206106293159401 \\
AKR1C3	& 0.000265342898771159 \\
[1ex] 
\hline 
\end{tabular}
\label{table:Studentt-testup} 
\end{table}

\begin{table}[h!]
\caption{Top genes downregulated in the current smokers group and upregulated in the never smokers group.} 
\centering 
\begin{tabular}{c c c} 
\hline\hline 
Gene symbol& $q$-value \\ [0.5ex] 
\hline 
MT1G	& 4.03809377378893e-07 \\
MT1X	& 4.72936617815058e-06 \\
MUC5B	& 2.38198903402317e-05 \\
CD81	& 3.1605221864278e-05 \\
MT1L	& 3.1605221864278e-05 \\
MT1H	& 3.1605221864278e-05 \\
SCGB1A1	& 4.50425200297664e-05 \\
EPAS1	& 4.63861480935914e-05 \\
FABP6	& 0.00017793865432854 \\
MT2A	& 0.000236481909692626 \\
MT1P2	& 0.000251264650053933 \\
[1ex] 
\hline 
\end{tabular}
\label{table:Studentt-testdown} 
\end{table}

When we filter down to the 
$630$ most variable genes,  
the three first principal components have an $L^2$-projection content
of $\alpha_2(\{1,2,3\}) =0.42$, 
whereas by estimation using randomization we would have expected it to be $0.065$. 
The projection in Figure \ref{fig:variancefilter} 
is thus probably informative.
We have again colored the samples according to smoking history as above.
The third principal component, containing $9\%$ of the total variance,
 can be seen to quite decently separate
the current smokers from the never smokers. We note that this was impossible to achieve 
without removing the artifact signal
since the artifact  signal completely obscured this separation.

\begin{figure}[h!]
\centering
\includegraphics[width=1.0\textwidth]{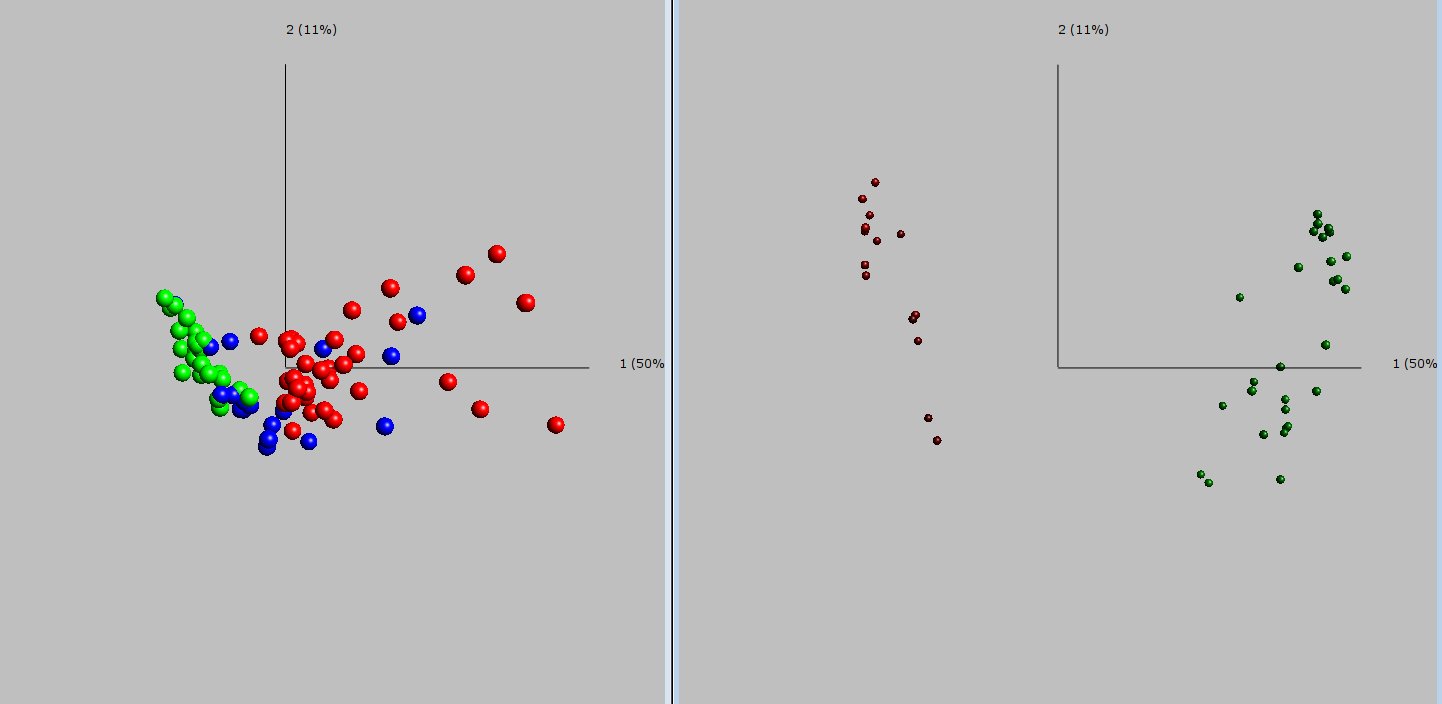}
\caption{A synchronized biplot showing samples to the left 
and variables to the right.}\label{fig:filterqbiplot}
\end{figure}

\begin{itemize}
\item {\bf Using the variance filtered list of 630 genes as a basis, 
following our exploration scheme, 
we now perform a series of Student t-tests between the groups of current smokers and never smokers, i.e. $34+23=57$
different subjects.}
\end{itemize}
For a specific level of significance 
we compute the $3$-dimensional 
(i.e. we let $S=\{1,2,3\}$)
$L^2$-projection  content resulting when we keep all the
rejected null hypotheses, i.e. statistical discoveries. 
For a sequence of t-tests parameterized by the  level of significance we now try to 
find a small level of significance and at the same time 
an observed $L^2$-projection content with a large quotient compared to 
the expected projection content estimated by randomization.
We supervise this procedure visually using three dimensional PCA-projections
looking for visually clear patterns.
For a level of significance of $0.00005$, 
leaving a total of $43$ genes (rejected nulls) and an FDR of $0.0007$ we have 
$\alpha_2(\{1,2,3\}, obsr)=0.71$ 
whereas the expected projection content for randomized data $\alpha_2(\{1,2,3\}, rand)=0.21$.
We have thus captured more than $3$ times of the expected projection content and at the same time
approximately $0.0007 \times 43=0.0301$ genes are false discoveries
and so with high probability we have found $43$ potentially important biomarkers.
We now visualize  all $75$ subjects using these $43$ genes as variables. 
In Figure \ref{fig:filterqbiplot} we see a 
synchronized biplot with samples to the left and variables to the right. 
The sample plot shows a perfect separation of current smokers and never smokers.
In the variable plot we see genes (green) that are 
upregulated in the current smokers group to the far right.
The top genes according to $q$-value for the Student t-test between current smokers and never smokers, 
that are upregulated in the current smokers group and downregulated in the never smokers group, are given in
Table \ref{table:Studentt-testup}. 
In Table \ref{table:Studentt-testdown} we list the top genes that are downregulated in the current smokers group and 
upregulated among the never smokers.

\subsection{Analysis of various muscle diseases.}
In the study by Bakay et. al. \cite{Bakay} the authors 
studied 125 human muscle biopsies from 13 diagnostic groups 
suffering from various muscle diseases. 
The platforms used were Affymetrix  U133A and U133B chips.
The dataset can be downloaded from NCBIs Gene Expression Omnibus 
(DataSet GDS2855, accession no. GSE3307). 
We will analyze the dataset looking for phenotypic classifications 
and also looking for biomarkers for the different phenotypes.
\begin{itemize}
\item{ \bf We first use $3$-dimensional PCA-projections of the samples of the data correlation matrix, filtering the genes 
with respect to variance and 
visually searching for clear patterns.}
\end{itemize}

\begin{figure}
  \centering
  \subfloat[A three dimensional PCA plot capturing $46\%$ of the total variance.]{\label{fig:musclevarfilter300plot}\includegraphics[width=0.5\textwidth]{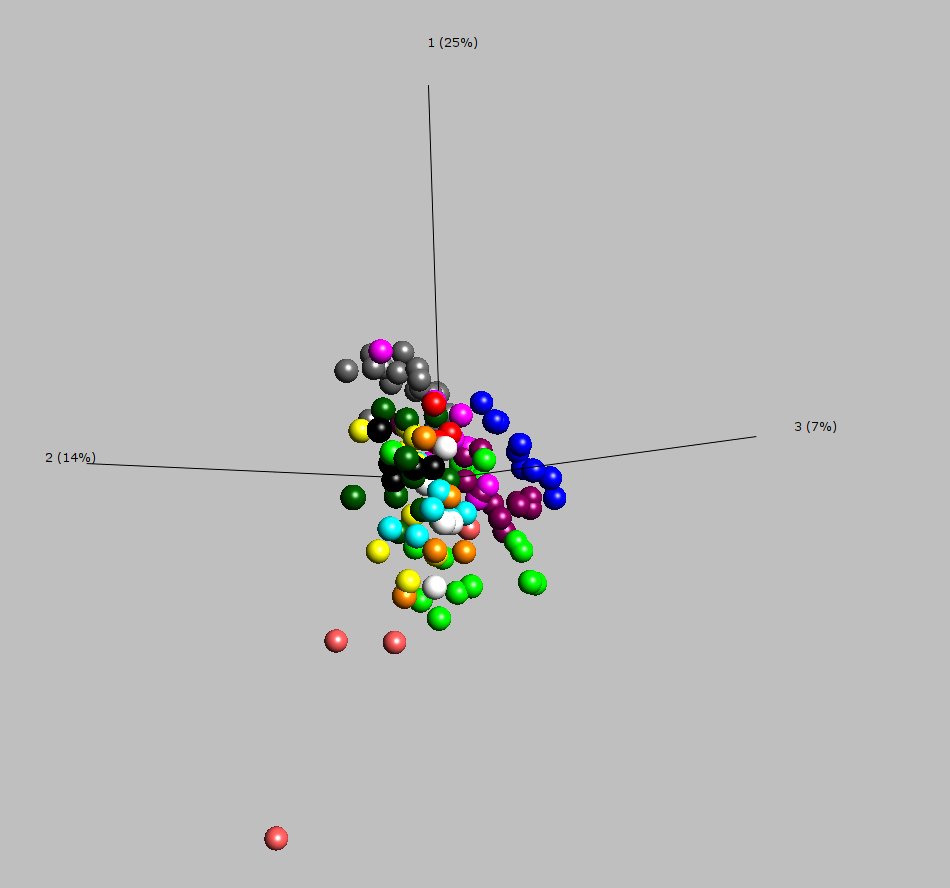}}                
  \subfloat[Color legend]{\label{fig:colorlegend}\includegraphics[width=0.4\textwidth]{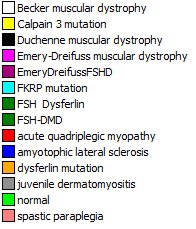}}
  \caption{PCA-projection of samples 
based on the variance filtered top 300 genes. 
Three out of four subjects in the group Spastic paraplegia (Spg)
clearly distinguish themselves. These three suffer from Spg-4, while the remaining Spg-patient
suffers from Spg-7.}
  \label{fig:musclevarfilter300}
\end{figure}

When filtering out the 
$300$ genes having most variability
over the sample set we see several samples clearly distinguishing themselves and we 
capture $46\%$ of the total variance compared to the, by randomization estimated, 
expected
$6\%$. The plot in 
Figure \ref{fig:musclevarfilter300} thus contains strong signals. 
Comparing with the color legend
we conclude that the patients suffering from
spastic paraplegia (Spg) contribute a strong signal. 
More precisely, three of the subjects suffering from the 
variant Spg-4 clearly distinguish themselves, while the remaining patient in the 
Spg-group suffering from Spg-7 falls close to the rest of the samples.

\begin{table}[h!]
\caption{Top genes upregulated in the Spastic paraplegia group (Spg-4).} 
\centering 
\begin{tabular}{c c c} 
\hline\hline 
Gene symbol& $q$-value \\ [0.5ex] 
\hline 
RAB40C	& 0.0000496417 \\
SFXN5	& 0.000766873 \\
CLPTM1L	& 0.00144164\\
FEM1A	& 0.0018485 \\
HDGF2	& 0.00188435 \\
WDR24	& 0.00188435\\
NAPSB	& 0.00188435\\
ANKRD23	& 0.00188435\\
[1ex] 
\hline 
\end{tabular}
\label{table:Studentt-testSPP} 
\end{table}
\begin{itemize}
\item {\bf We perform Student t-tests between the spastic paraplegia group and the normal group.}
\end{itemize}
As before we now, in three dimensional PCA-projections, visually search for clearly distinguishable patterns
in a sequence of Student t-tests parametrized by level of significance, while at the same time trying to obtain a small FDR.
At a level of significance of $0.00001$,
leaving a total of $37$ genes (rejected nulls) with an FDR of $0.006$, the first three principal components capture $81\%$ of the variance compared to the, by randomization, expected $31\%$. 
Table \ref{table:Studentt-testSPP} 
lists the top genes upregulated in the group
spastic paraplegia. 
We can add that these genes are all strongly upregulated for the three
particular subjects suffering from Spg-4, while that pattern is less clear
for the patient suffering from Spg-7.

\begin{itemize}
\item {\bf In order to find possibly obscured signals, we now remove the Spastic paraplegia group 
from the analysis.}
\end{itemize}
We also remove the Normal group from the analysis since we really 
want to compare the different diseases. 
Starting anew with the entire set of genes, 
filtering with respect to variance, we visually obtain clear patterns for the $442$ most variable genes.
The first three principal components capture $46\%$ of the total variance compared to the, by randomization estimated,
 expected $6\%$.
\begin{itemize}
\item {\bf Using these $442$ most variable genes as a basis for the analysis, 
we now construct a graph connecting every sample with its
two nearest (using euclidean distances in the $442$-dimensional space) neighbors.}
\end{itemize}
As described in the section on multidimensional scaling above, we now compute
 geodesic distances in the graph between samples, 
 and construct a resulting distance (between samples) matrix. 
We then convert this distance matrix to 
a corresponding covariance matrix and finally 
perform a PCA on this covariance matrix. 
The resulting plot (together with the used graph)
of the so constructed three dimensional PCA-projection is
depicted in Figure \ref{fig:muscleisomap}. 
\begin{figure}[h!]
\centering
\includegraphics[width=0.5\textwidth]{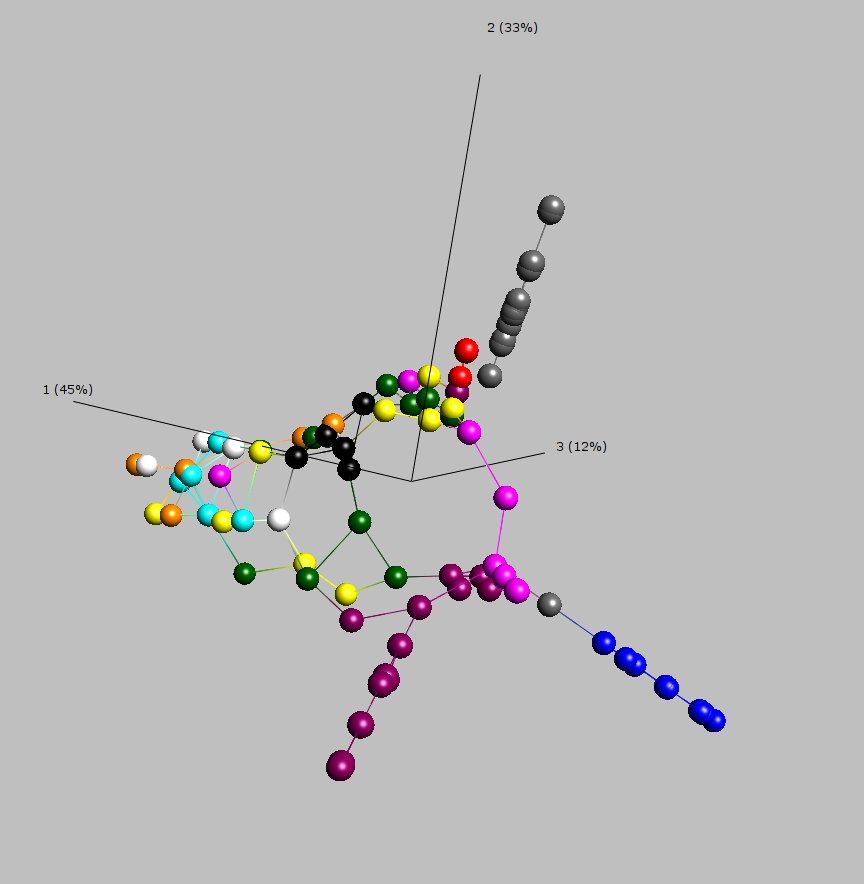}
\caption{Effect of the ISOMAP-algorithm. 
We can identify a couple of clusters corresponding to 
the groups juvenile dermatomyositis, amyotophic lateral sclerosis, 
acute quadriplegic myopathy and Emery Dreifuss FSHD.}\label{fig:muscleisomap}
\end{figure}

Comparing with the Color legend in Figure \ref{fig:musclevarfilter300},  
we clearly see that the groups juvenile dermatomyositis, amyotophic lateral sclerosis, 
acute quadriplegic myopathy and also Emery-Dreifuss FSHD distinguish themselves. 

One should now go on using Student t-tests to find 
biomarkers (i.e. genes) distinguishing these different groups of patients, 
then eliminate these distinct groups and go on searching for more
structure in the dataset.

\subsection{Pediatric Acute Lymphoblastic Leukemia (ALL)} 
We will finally analyze a dataset 
consisting of gene expression profiles from 132 different patients, all suffering from some type of pediatric acute lymphoblastic leukemia (ALL). 
For each patient the expression levels of 22282 genes are analyzed. 
The dataset comes from the study by Ross et. al. \cite{R} and the primary data
are available at the St. Jude Children's Research Hospital's website \cite{StJ}.
The platform used to collect this example data set was Affymetrix HG-U133 chip,
using the Affymetrix Microarray suite to select, prepare and normalize the data. 

As before we start by performing an SVD on the data correlation matrix visually searching for interesting patterns and assessing the signal to noise ratio
by comparing the actual $L^2$-projection content in the real world data projection
with the expected $L^2$-projection content in corresponding randomized data.
\begin{itemize}
\item {\bf We filter the genes with respect to variance, 
looking for strong signals.}
\end{itemize}

\begin{figure}[h!]
\centering
\subfloat[Projection capturing $38\%$ of the total variance.]{\label{fig:filter873}\includegraphics[width=0.5\textwidth]{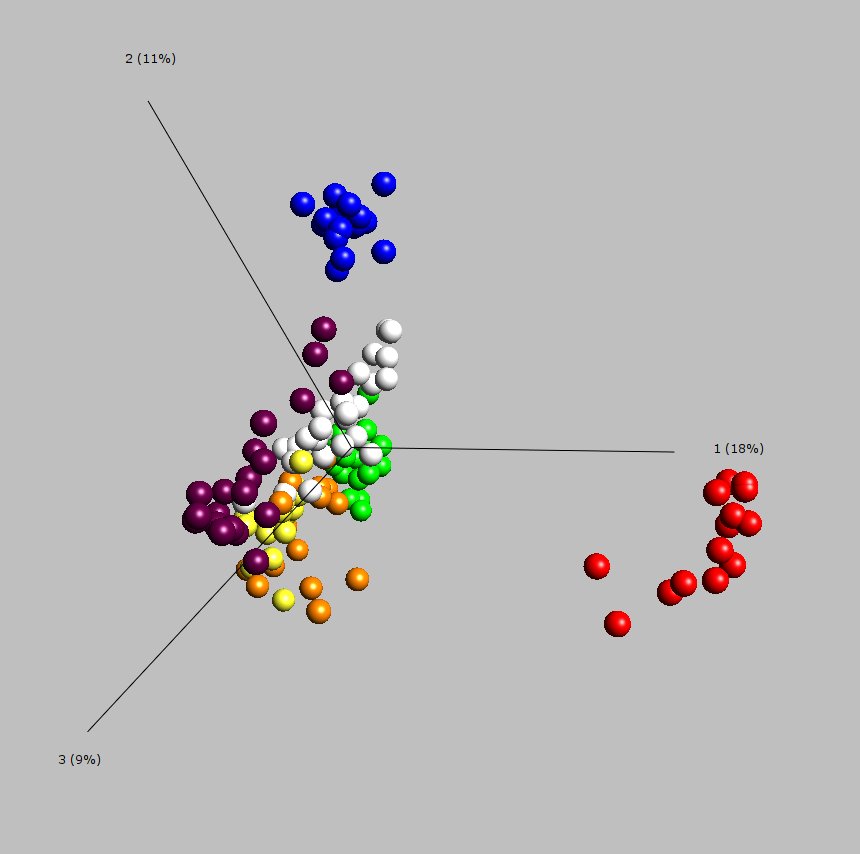}}               
\subfloat[Color legend]{\label{fig:colorlegendross}\includegraphics[width=0.25\textwidth]{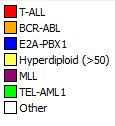}}
\caption{Variance filtered PCA-projection of the correlation datamatrix based on $873$ genes.
The group T-ALL clearly distinguish itself.}
\label{fig:rossvarfilter873}
\end{figure}

In Figure \ref{fig:rossvarfilter873}
we see a plot of a three dimensional projection using the $873$ most variable genes as a basis 
for the analysis. 
We clearly see that the group T-ALL 
is mainly responsible for the signal resulting in the first principal component occupying $18 \%$ of the
total variance. 
In fact by looking at supplied annotations we can conclude that 
all of the other subjects in the dataset are suffering from B-ALL, the other main ALL type.

\begin{itemize}
\item {\bf We now perform Student t-tests between the group T-ALL and the rest. 
We parametrize by level of significance and 
visually search for clear patterns.}
\end{itemize}

\begin{figure}[h!]
\centering
\includegraphics[width=1.0\textwidth]{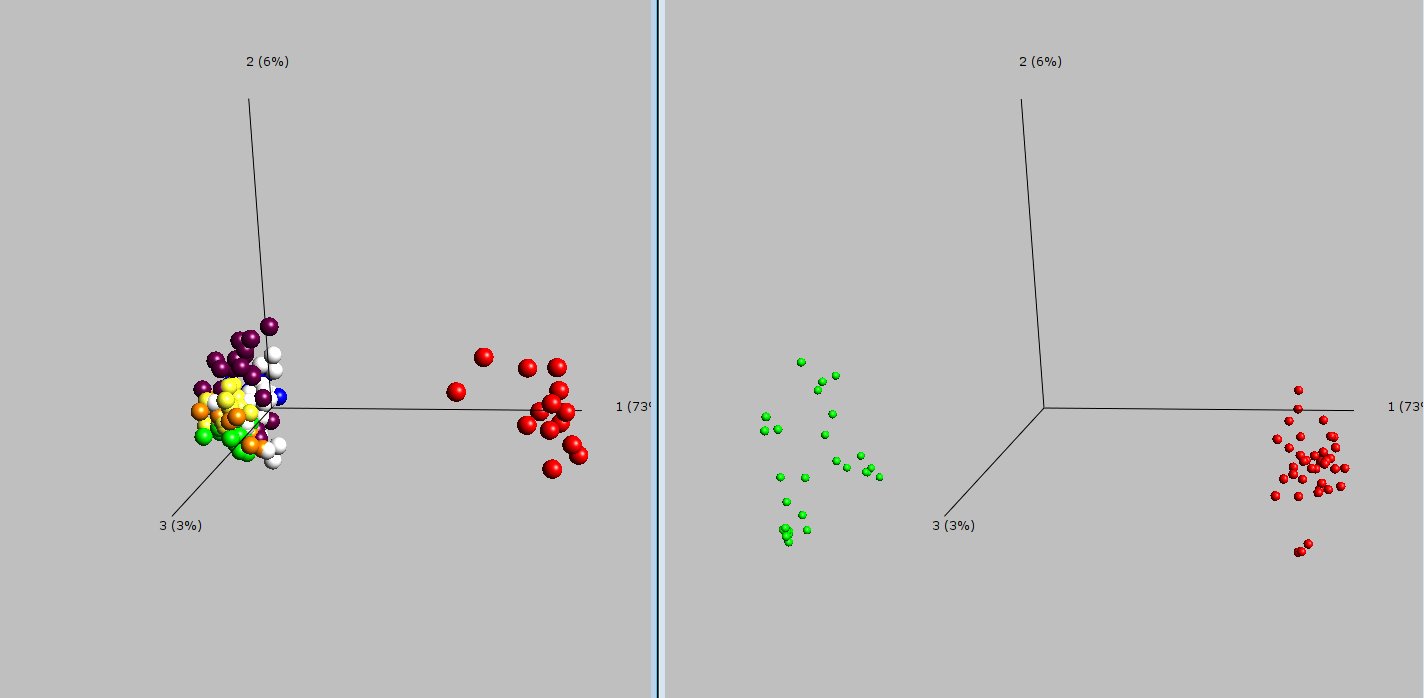}
\caption{FDR $=1.13e-24$. 
A synchronized biplot showing samples to the left 
and genes to the right. 
The genes are colored according to their expression level in the T-ALL group. 
Red $=$ upregulated and green $=$ downregulated.}\label{fig:biplotT-ALLvsB-ALL}
\end{figure}

In Figure \ref{fig:biplotT-ALLvsB-ALL} we see a biplot 
based on the 70 genes that best discriminate between
T-ALL and the rest. 
The FDR is extremely low $FDR=1.13e-24$ telling us that with a very high
probability the genes found are relevant discoveries. 
The most significantly upregulated genes in the T-ALL group are 

CD3D, CD7, TRD@, CD3E, SH2D1A and TRA@.

\noindent
The most significantly downregulated genes in the T-ALL group are 

CD74, HLA-DRA, HLA-DRB, HLA-DQB and BLNK.

By comparing with gene-lists from the MSig Data Base (see \cite{MSig}) 
we can see that the genes that are upregulated
in the T-ALL group (CD3D, CD7 and CD3E) are represented in lists of genes connected 
to lymphocyte activation and lymphocyte differentiation.

\begin{figure}[h!]
\centering
\includegraphics[width=0.5\textwidth]{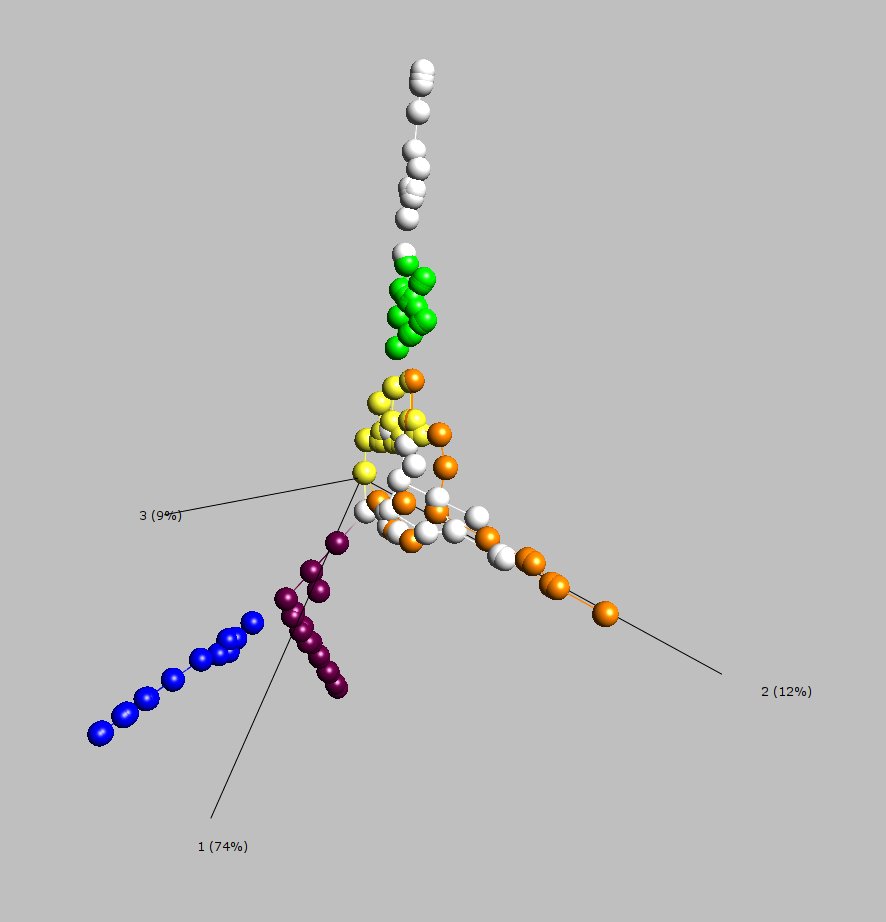}
\caption{Effect of the ISOMAP-algorithm. We can clearly see the groups E2A-PBX1, MLL and TEL-AML1. The group TEL-AML1
is connected to a subgroup of the group called Other (white).}\label{fig:ALLisomap}
\end{figure}

\begin{itemize}
\item {\bf We now remove the group T-ALL from the analysis and search 
for visually clear patterns among three dimensional PCA-projections 
filtrating the genes with respect to variance.} 
\end{itemize}
Starting anew with the entire list of genes, filtering with respect to variance,
a clear pattern is obtained for the $226$ most variable genes. We capture $43\%$ of the total variance
as compared to the expected $6.5\%$. We thus have strong signals present.

\begin{itemize}
\item {\bf Using these $226$ most variable genes as a basis for the analysis, 
we now construct a graph connecting every sample with its
two nearest neighbors.}
\end{itemize}
We now perform the ISOMAP-algorithm with respect to this graph.
The resulting plot (together with the used graph)
of the so constructed three dimensional PCA-projection is
depicted in Figure \ref{fig:ALLisomap}. 
We can clearly distinguish the groups E2A-PBX1, MLL and TEL-AML1. The group TEL-AML1
is connected to a subgroup of the group called Other. 
This subgroup actually corresponds to the 
Novel Group discovered in the study by Ross et.al. \cite{R}.
Note that by using ISOMAP we discovered this Novel subgroup only by variance filtering the genes
showing that ISOMAP is a useful tool for visually supervised clustering.

\vskip 20 mm

{\bf Acknowledgement}
I dedicate this review article to Professor Gunnar Sparr.
Gunnar has been a role model for me, and many other 
young mathematicians, 
of a pure mathematician that evolved into contributing serious
applied work. 
Gunnar's help, support and general encouragement 
have been very important during my own development within the field of mathematical modeling.
Gunnar has also been one of the ECMI pioneers introducing Lund University
to the ECMI network.

I sincerely thank Johan R{\aa}de for helping me 
to learn almost everything I know about data exploration.
Without him the here presented work would truly 
not have been possible.
Applied work is best done in collaboration and I am blessed with
Thoas Fioretos as my long term collaborator 
within the field of molecular biology.
I am grateful for what he has tried to teach me
and I hope he is willing to continue to try.
Finally I thank Charlotte Soneson for reading this work 
and, as always, giving very valuable feed-back.

\vskip 20 mm

{\bf Keywords}: multivariate statistical analysis, principal component analysis, biplots, multidimensional scaling, multiple hypothesis testing, false discovery rate, microarray, bioinformatics\\

\end{document}